\theoremstyle{definition}
\theoremstyle{plain}
\newtheorem{twr}{Theorem}[section]
\theoremstyle{definition}
\newtheorem{uwg}[twr]{Remark}
\newtheorem{kon}[twr]{Convention}
\newtheorem{defi}[twr]{Definition}
\theoremstyle{plain}
\newtheorem{lem}[twr]{Lemma}
\newtheorem{stw}[twr]{Proposition}
\numberwithin{equation}{section}
\def\bt{\begin{twr}}
\def\et{\end{twr}}
\def\bs{\begin{stw}}
\def\es{\end{stw}}
\def\bp{\begin{proof}}
\def\ep{\end{proof}}
\def\hi{\angle}
\def\ku{\kappa}
\def\kub{\kappa_{\partial}}
\def\mat{\mathrm}
\begin{document}

\title[Systolic groups are not torsion]{Infinite systolic groups are not torsion}

\author[T. Prytu{\l}a]{Tomasz Prytu{\l}a}
\thanks{The author was supported by the Danish National Research Foundation through the Centre for Symmetry and Deformation (DNRF92)}

\address{School of Mathematics, University of Southampton, Southampton SO17 1BJ, UK}
\email{t.p.prytula@soton.ac.uk}

\date{}

\begin{abstract}
We study $k$--systolic complexes introduced by T.\ Januszkiewicz and J.\ \'{S}wi\k{a}tkowski, which are simply connected simplicial complexes of simplicial nonpositive curvature. Using techniques of filling diagrams we prove that for $k \geq 7$ the $1$--skeleton of a $k$--systolic complex is Gromov hyperbolic. We give an elementary proof of the so-called Projection Lemma, which implies contractibility of $6$--systolic complexes. We also present a new proof of the fact that an infinite group acting geometrically on a $6$--systolic complex is not torsion.
\end{abstract}

\subjclass[2010]{20F65, 05E45, 05E18}
\keywords{systolic complex, filling diagram, simplicial nonpositive curvature}

\maketitle

\section*{Introduction}\label{r:1}

Simply connected nonpositively curved metric spaces, called $\mathrm{CAT}(0)$ spaces, have been intensively studied for over 50 years, and they are one of the major parts of geometric group theory~\cite{bridson1999metric}. Because of a theorem by Gromov, a special place among $\mathrm{CAT}(0)$ spaces is occupied by $\mathrm{CAT}(0)$ cube complexes: a cube complex is $\mathrm{CAT}(0)$ if and only if it is simply connected and satisfies an easily checked, local combinatorial condition (links are flag).

Therefore naturally arises the question if there exists a similar characterization for simplicial complexes. A partial answer to that question is the notion of \emph{systolic complexes}. These are simply connected simplicial complexes, whose links also satisfy certain combinatorial condition called \emph{$6$--largeness}. This makes them good analogues of $\mathrm{CAT}(0)$ cube complexes. This condition may be treated as an upper bound for simplicial curvature around the vertex, and hence complexes with $k$--large links for $k \geq 6$ are also called complexes of \emph{simplicial nonpositive curvature} (SNPC). The $k$--largeness condition is geometrically motivated by the $2$--dimensional case, where systolic complexes actually are $\mathrm{CAT}(0)$. In general case SNPC does not imply nor is implied by the metric nonpositive curvature.

Systolic complexes were introduced by V.~Chepoi under the name \emph{bridged complexes} in \cite{chepoi2000graphs}, although their $1$--skeleta, the \emph{bridged graphs}, were studied earlier by V.~Chepoi, R.~E.~Jamison, M.~Farber and V. P. Soltan  \cite{soltanchepo, farberjami, chepoialgo}. 

In the context of group theory, systolic complexes were rediscovered independently by F.~Haglund~\cite{haglund2003complexes} and by J.~\'{S}wi\k{a}tkowski and T.~Januszkiewicz~\cite{januszsimplicial}, and they were used to construct word-hyperbolic groups of large cohomological dimension, that do not come from isometries of the hyperbolic $n$--space. However, systolic complexes turned out to be an interesting object of study on their own. It has been shown that in many situations systolic complexes behave like $\mathrm{CAT}(0)$ spaces, and share many of their properties \cite{januszsimplicial, przytycki2007systolic, elsner2009flats, elsner2009isometries}.

In the current work we mainly focus on analogies between systolic complexes and $\mathrm{CAT}(0)$ spaces. We present new and often more direct proofs of various results of \cite{januszsimplicial}, mostly using techniques of \emph{filling diagrams}.
Since our article is self-contained it may be also treated as an introduction to the theory of systolic complexes. Another introductory source is \cite{swiatkowski2006}.

Let us briefly describe the content of the article. In Section~\ref{r:21} we give preliminary definitions and fix the notation. Then we pass to Section~\ref{r:23} where we define $k$--large and $k$--systolic complexes, and we introduce combinatorial tools to study these complexes. In Section~\ref{r:24} we prove that $1$--skeleton of $k$--systolic complex is $\delta$--hyperbolic for $k \geq 7$. Proofs in these sections are usually simplified versions of ones in~\cite{januszsimplicial}, however, we include them for the integrity of the article. From Section~\ref{r:projection} on our research focuses on the case where $k=6$. We give an elementary proof of the simplicial version of Cartan--Hadamard Theorem, which states that systolic complexes are contractible. Finally we turn to group theory. In Section~\ref{r:directed} we discuss properties of \emph{directed geodesics}, and in Section~\ref{r:torsion} we use them to give a direct (i.e. not invoking biautomaticity) proof of a theorem that an infinite group acting geometrically on a systolic complex is not torsion.

\subsection*{Acknowledgements}I would like to thank my advisor Piotr Przytycki for patience, help and guidance, of which he was so full during the year of working on this article. I would also like to thank anonymous Referees for constructive remarks.

\section{Notation, basic definitions and combinatorial Gauss--Bonnet }\label{r:21}

Let $X$ be a simplicial complex. We do not assume that $X$ is finite dimensional nor that it is locally finite. However, when $X$ is finite dimensional, we define its dimension $\mat{dim}X$ to be the largest number $n$, such that $X$ contains an $n$--simplex. We equip $X$ with a CW--complex topology, and always consider $X$ as a topological space, rather than an abstract simplicial complex. Most of the time though we are interested only in the combinatorial structure of $X$.

Given a subset of vertices $\{v_1, \ldots, v_n\}$ of $X$ let $\mat{span} \{v_1, \ldots, v_n\}$ denote the largest subcomplex of $X$ that has $\{v_1, \ldots, v_n\}$ as its vertex set. We will call it a subcomplex \emph{spanned by} $\{v_1, \ldots, v_n\}$. If this subcomplex is a simplex, we denote it by $[v_0,\ldots,v_n]$. A subcomplex spanned by a collection of subcomplexes $\{X_1, \ldots, X_n\}$ is defined as the subcomplex spanned by all the vertices of all $X_i$'s.

We say that $X$ is \emph{flag} if every set of vertices pairwise connected by edges spans a simplex of $X$. 
By a \emph{cycle} in $X$ we understand a subcomplex homeomorphic to $S^1$. If $\gamma$ is a cycle, by $|\gamma|$ we denote the number of edges in $\gamma$ and we call it the \emph{length} of $\gamma$. A \emph{diagonal} in $\gamma$ is an edge connecting two nonconsecutive vertices of $\gamma$. A \emph{link} of a vertex $v$, denoted by $X_v$, is a subcomplex of $X$ that consists of all simplices $\sigma \in X$ that do not contain $v$, but that together with $v$ span a simplex of~$X$.

A \emph{simplicial map} $f\colon X \to Y$ between simplicial complexes $X$ and $Y$ is a map which sends vertices to vertices, and if vertices $v_0,\ldots,v_k \in X$ span a simplex $\sigma$ of $X$ then their images span a simplex $\tau$ of $Y$ and we have $f(\sigma)=\tau$. Therefore a simplicial map is determined by its values on the vertex set of $X$. A simplicial map is called \emph{nondegenerate}, if it is injective on each simplex.

We now introduce the notion of combinatorial curvature and the combinatorial version of the Gauss--Bonnet Theorem.

\begin{defi} Let $S$ be a triangulated surface, possibly with a boundary. For a vertex $v \in S$, by $\hi_S(v)$ we denote the number of triangles ($2$--simplices) of $S$ containing $v$. (If $S$ is clear out of context we may skip the subscript and write simply $\hi(v)$.) Let $\mathrm{int} S$ denote the set of interior vertices of the triangulation of $S$, and $\partial S$ the set of boundary vertices. Then we define:
\begin{itemize}
\item for $v \in \mathrm{int} S$, the \emph{curvature} of $v$ by $\kappa(v)=6-\hi_S(v)$,
\item for $v \in \partial S$, the \emph{boundary curvature} of $v$ by $\kub(v)=3-\hi_S(v)$.
\end{itemize}
\end{defi}

\begin{twr}{\emph{(Combinatorial Gauss--Bonnet)}}  Let $S$ be a compact triangulated surface. Let $\chi(S)$ denote the Euler characteristic of $S$. Then the following formula holds:
\[6\chi(S)= \sum_{v \in \partial S} \kub(v) + \sum_{v \in \mathrm{int} S} \kappa(v).\]

\end{twr}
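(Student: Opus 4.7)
The plan is a straightforward double-counting argument using the standard identity $\chi(S)=V-E+F$, where $V$, $E$, $F$ denote the number of vertices, edges and triangles in the triangulation. I would split vertices according to position: $V=V_i+V_b$, where $V_i=|\mathrm{int}\,S|$ and $V_b=|\partial S|$.

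First I would compute the right hand side directly from the definitions:
\begin{equation*}
\sum_{v \in \partial S} \kub(v) + \sum_{v \in \mathrm{int} S} \kappa(v)
= 3V_b + 6V_i - \sum_{v \in S^{(0)}} \hi(v).
\end{equation*}
The crucial corner-counting observation is that each triangle contributes exactly three corners, one at each of its vertices, so $\sum_{v} \hi(v) = 3F$. Hence the right-hand side equals $3V_b+6V_i-3F$.

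Next I would massage $6\chi(S)=6V_b+6V_i-6E+6F$ into the same form. After subtracting the two expressions, matching them reduces to the single combinatorial identity
\begin{equation*}
2E \;=\; V_b + 3F.
\end{equation*}
I would prove this by classifying edges into boundary edges $E_b$ and interior edges $E_i$. Each interior edge lies in exactly two triangles and each boundary edge in exactly one, so counting (edge, triangle)-incidences gives $3F = 2E_i + E_b$. Since the boundary of $S$ is a disjoint union of simple cycles, it has the same number of edges and vertices, so $E_b=V_b$. Substituting yields $2E = 2E_i + 2E_b = 3F + E_b = 3F + V_b$, as required.

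The argument is essentially routine, and there is no real obstacle; the only subtle point worth stating carefully is the equality $E_b=V_b$, which uses that $\partial S$ is a $1$-manifold (a disjoint union of circles) inheriting a CW-structure from the triangulation. Once this combinatorial identity is in place, assembling the two sides gives $6\chi(S)=3V_b+6V_i-3F$, which is exactly the right-hand side computed above, completing the proof.
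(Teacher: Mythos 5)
Your proof is correct, and it takes a genuinely different route from the one in the paper. The paper only proves the statement for $S$ homeomorphic to a $2$-disc, by induction on the number of triangles (analysing the two ways a new triangle can be attached along the boundary), and defers the general case to Lyndon--Schupp. You instead give a direct double-counting argument: writing the right-hand side as $3V_b+6V_i-\sum_v\hi(v)$, using the corner count $\sum_v\hi(v)=3F$, and reducing the identity to $2E=V_b+3F$, which you verify via the edge--triangle incidence count $3F=2E_i+E_b$ and the observation $E_b=V_b$ for the boundary $1$-manifold. All of these steps check out (in particular the reduction $6V_b+6V_i-6E+6F=3V_b+6V_i-3F \iff 2E=V_b+3F$ is right, and your argument covers the closed case $V_b=E_b=0$ as well). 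What your approach buys is generality and brevity: it proves the theorem for an arbitrary compact triangulated surface, with or without boundary and of any genus, in one stroke, whereas the paper's induction is tied to the disc and is somewhat informal about why every triangulation of the disc arises by the two attachment moves pictured. The paper's inductive proof, on the other hand, is more in the spirit of the local curvature bookkeeping used repeatedly later in the text. The only point you should make fully explicit is the one you already flag: that $\partial S$ inherits the structure of a disjoint union of simplicial circles, which is what justifies $E_b=V_b$.
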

\begin{proof}
We give a proof only in the case when $S$ is homeomorphic to the $2$--disc. The proof of a general version can be found in~\cite[Theorem~V.3.1]{lyndon1979combinatorial}.

We proceed by induction on the number of triangles in $S$. Since $S$ is contractible, its Euler characteristic is equal to $1$. If $S$ is a single triangle $[v_1,v_2,v_3]$, then each $v_i$ is a boundary vertex and we have $\kub(v_i)=2$, hence $\sum_{i=1}^{3} \kub(v_i)=6=6\chi(S)$.

Now assume that $S$ consists of more than one triangle and choose a triangle $T=[v_1,v_2,v_3]$ in $S$ such that at least one of its edges is a boundary edge. Without loss of generality we may assume that it is $[v_1, v_2]$. We have the following three cases to consider. If vertex $v_3$ is an interior vertex, then we are in situation b) in Figure~\ref{p.bonnet}. If $v_3$ is a boundary vertex, then either both edges $[v_1, v_3]$ and $[v_2,v_3]$ are interior edges or exactly one of them is a boundary edge. In the first case we are in situation c) in Figure~\ref{p.bonnet}, and in the second case we are in situation a).

\begin{figure}[!h]
\centering
\begin{tikzpicture}[scale=1]
\definecolor{lgray}{rgb} {0.850,0.850,0.850}

\draw [fill=lgray] (-3,1) to [out=135, in=0] (-4,1.5) to (-4,1.5) to [out=180, in=90] (-5,0.5) to (-5,0.5) to [out=270, in=180] (-4,-0.5) to (-4,-0.5) to [out=0, in=225] (-3,0) to (-3,1);
\draw (-3,0)--(-3,1)--(-3+0.87,0.5)--(-3,0);
\node [below right] at (-3,0) {$v_3$};
\node [above right] at (-3,1) {$v_1$};
\node [right] at (-3+0.87,0.5) {$v_2$};

\node at (-3+0.3,0.5) {$T$};

\node at (-4,0.5) {$S'$}; 

\node  at (-5.5,1.5) {a)};
\node  at (-0.5,1.5) {b)};

\draw [fill=lgray] (-3+1+4,1) to [out=135, in=0] (-4+1+4,1.5) to (-4+1+4,1.5) to [out=180, in=90] (-5+1+4,0.5) to (-5+1+4,0.5) to [out=270, in=180] (-4+1+4,-0.5) to (-4+1+4,-0.5) to [out=0, in=225] (-3+1+4,0) to (-3+1+4,1);

\draw [fill=white] (-3+1+4,0)--(-3+1+4,1)--(-3+4+1-0.87,0.5)--(-3+4+1,0);

\node [below right] at (-3+1+4,0) {$v_2$};
\node [above right] at (-3+1+4,1) {$v_1$};
\node [ left] at (-3+4+1-0.87,0.5) {$v_3$};

\node  at (-3+4+1-0.3,0.5) {$T$};

\node [above] at (-3+4+1-1,.875) {$S'$};

\node  at (3.75,1.5) {c)};

\draw [fill=lgray] (-3+1+4+5-1.25,1) [out=100, in=60] to (-3+1+4+3+0.8-1.25,1+0.75) [out=240, in=145] to (-3+4+1+5-0.87-1.25,0.5);

\draw [fill=lgray] (-3+1+4+5-1.25,0) [out=260, in=300] to (-3+1+4+3+0.8-1.25,-0.75) [out=120, in=215] to (-3+4+1+5-0.87-1.25,0.5);

\draw [fill=white] (-3+1+4+5-1.25,0)--(-3+1+4+5-1.25,1)--(-3+4+1+5-0.87-1.25,0.5)--(-3+4+1+5-1.25,0);

\node [below right] at (-3+1+4+5-1.25,0) {$v_2$};
\node [above right] at (-3+1+4+5-1.25,1) {$v_1$};
\node [ left] at (-3+4+1-0.87+5-1.25-0.1,0.5) {$v_3$};

\node  at (-3+4+1-0.3+5-1.25,0.5) {$T$};

\node [above] at (-3+4+1-1+5+0.25-1.25,.875+0.05) {$S'_1$};

\node [above] at (-3+4+1-1+5+0.25-1.25,-1.5+.875+0.05) {$S'_2$};

\end{tikzpicture}
\caption{Three possible ways in which triangle $T$ can lie in $S$.} 
\label{p.bonnet}
\end{figure}
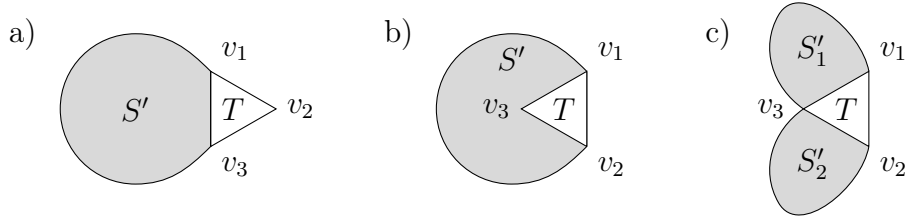

We first consider situations a) and b). Remove $T$ from $S$ and call the resulting surface $S'$. By the inductive assumption $S'$ satisfies the Gauss--Bonnet formula. We will show that after adding $T$ back the formula still holds.
In situation a) after adding $T$, the boundary curvature of both $v_1$ and $v_2$ decreases by $1$. The new vertex $v_3$ contributes to the Gauss--Bonnet sum its boundary curvature which is equal to $2$, hence the whole sum remains unchanged.

In situation b) the vertex $v_3$ becomes an interior vertex, so its curvature increases by $3$, but now it is contained in one more triangle, hence all together its curvature increases by $2$. The boundary curvature of both $v_1$ and $v_2$ again decreases by $1$, so the sum  remains unchanged.

In situation c) we proceed as follows. Consider two subsurfaces $S'_1$ and $S'_2 \cup T$ of $S$ (i.e.\ we cut $S$ along the edge $[v_1,v_3])$. By the assumption both $S'_1$ and $S'_2 \cup T$ have less triangles than $S$, and hence each of them satisfies the Gauss--Bonnet formula.

Let us compare the sum of the Gauss--Bonnet sums of $S'_1$ and $S'_2 \cup T$ with the Gauss--Bonnet sum of $S$. These two sums differ only at summands corresponding to $v_1$ and $v_3$. In the first sum we have four summands \[(3- \hi_{S'_1}(v_1))+ (3- \hi_{S'_2 \cup T}(v_1)) + (3- \hi_{S'_1}(v_3))  +(3- \hi_{S'_2 \cup T}(v_3)).\] 
In the second sum we have two summands \[(3- \hi_{S}(v_1))+ (3- \hi_{S}(v_3)).\] Since clearly \[\hi_{S'_1}(v_i) + \hi_{S'_2 \cup T}(v_i)= \hi_{S}(v_i)\] for $i \in \{1,3\}$, we get that the first sum is bigger than the second sum by $6$. This finishes the proof since by the inductive assumption the first sum is equal to $12$.
\end{proof}

\section{k-largeness condition}\label{r:23}
In this section we introduce two conditions for simplicial complexes called $k$--largeness and local $k$--largeness. The definition is purely combinatorial, but still geometrically inspired by $2$--dimensional case. In terms of these conditions we define $k$--systolic complexes, which are the main objects of our discussion. After defining, we discuss relations between $k$--large and $k$--systolic complexes and at the end we give some examples and non-examples of those.

\begin{defi}
\label{kalargedefi}
Given a natural number $k \geq 4$, a simplicial complex $X$ is $k$--\emph{large} if it is flag and if every cycle of length less than $k$ has a diagonal. We say that $X$ is \emph{locally $k$--large} if links of all vertices of $X$ are $k$--large.
\end{defi}
Observe that if $k\leq m$ then $m$--largeness implies $k$--largeness. Now we will show that $k$--largeness implies local $k$--largeness. Later we will show that under some additional assumptions the converse also holds.

\begin{lem}
\label{lem:globaltolocal}
If a simplicial complex $X$ is $k$--large, then it is locally $k$--large.
\end{lem}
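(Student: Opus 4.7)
The plan is to unpack both parts of the definition of $k$-largeness for the link $X_v$: we must check that $X_v$ is flag, and that every cycle in $X_v$ of length less than $k$ admits a diagonal (in $X_v$). Both facts will follow by transferring the corresponding data from $X_v$ back up to $X$, invoking $k$-largeness of $X$ there, and then pulling the conclusion back down to $X_v$. The key observation throughout is that a vertex $w$ lies in $X_v$ precisely when $[v,w]$ is an edge of $X$, and a simplex $\sigma$ lies in $X_v$ precisely when $v*\sigma$ is a simplex of $X$.

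For flagness of $X_v$, I would take vertices $w_0,\dots,w_n$ of $X_v$ that are pairwise joined by edges in $X_v$. Each $w_i$ is joined to $v$ by an edge of $X$ (by definition of the link), and the $w_i$'s are pairwise joined in $X$ as well. Hence $v,w_0,\dots,w_n$ form a set of vertices of $X$ that are pairwise connected by edges, so flagness of $X$ produces a simplex $[v,w_0,\dots,w_n]$ in $X$. Its face opposite $v$ is the desired simplex of $X_v$ spanned by $w_0,\dots,w_n$.

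For the cycle condition, I would take a cycle $\gamma\subset X_v$ with $|\gamma|<k$, say on vertices $w_0,w_1,\dots,w_{m-1}$. The same combinatorial cycle lives in $X$, still of length $m<k$, so $k$-largeness of $X$ yields a diagonal, i.e.\ an edge $[w_i,w_j]$ of $X$ with $i,j$ non-consecutive. It remains to check this edge actually lies in $X_v$. But $w_i$ and $w_j$ are both vertices of $X_v$, so $[v,w_i]$ and $[v,w_j]$ are edges of $X$, and now $v,w_i,w_j$ are pairwise joined by edges, so by flagness of $X$ they span a $2$-simplex of $X$. Thus $[w_i,w_j]$ is an edge of $X_v$ and serves as the required diagonal.

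I do not expect any real obstacle here: the argument is a direct application of the join-style identification of a simplex in $X_v$ with a simplex in $X$ containing $v$, together with flagness of $X$ used twice (once to produce simplices of $X_v$ from edges, once to lift the diagonal from $X$ to $X_v$). The only mild subtlety worth flagging in the write-up is that $X$-diagonals of a cycle in $X_v$ are automatically $X_v$-diagonals, which is exactly the content of the flagness step in the second paragraph.
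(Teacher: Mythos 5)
Your proof is correct and the diagonal-lifting argument is essentially identical to the paper's: view the cycle in $X_v$ as a cycle in $X$, get a diagonal there by $k$-largeness, and use flagness of $X$ to push the diagonal back into the link. You are in fact slightly more complete than the paper, which silently omits the verification that $X_v$ is itself flag; your first paragraph supplies that missing (easy) check.
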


\begin{proof}Let $v_0$ be any vertex of $X$, and let $\gamma$ be a cycle of length $m < k$ in the link $X_{v_0}$. Then $\gamma$ is also a cycle in $X$ so it must have a diagonal, call it $[v_1, v_2]$. Since $X$ is flag, there is a simplex $[v_0, v_1, v_2]\subset X$. This means that the edge $[v_1, v_2]$ belongs to $X_{v_0}$ and thus it is a required diagonal for $\gamma$.
\end{proof}

Here we state the converse to Lemma~\ref{lem:globaltolocal}.

\begin{twr}
\label{twr:localtoglobal}
For $k \geq 6$, let $X$ be simply connected and locally $k$--large simplicial complex. Then $X$ is $k$--large.
\end{twr}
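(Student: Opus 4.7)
The plan is to argue by contradiction using simplicial filling discs together with the combinatorial Gauss--Bonnet formula from Section~\ref{r:21}. If $X$ is not $k$-large, then either flagness fails (three pairwise adjacent vertices fail to span a $2$-simplex) or some embedded cycle $\gamma \subset X$ of length $m$ with $4 \leq m < k$ has no diagonal; in either case one obtains a cycle $\gamma \subset X$ of length $m < k$ that is not the boundary of a $2$-simplex and admits no diagonal, and I aim to derive a contradiction.

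Because $X$ is simply connected, $\gamma$ bounds a \emph{filling disc}: a triangulated $2$-disc $D$ equipped with a simplicial map $f \colon D \to X$ which restricts to an isomorphism $\partial D \cong \gamma$. I would fix $(D,f)$ of minimal area (number of triangles in $D$), refined by a suitable secondary complexity ensuring that $f$ is nondegenerate near every interior vertex --- in particular, that for every $v \in \mathrm{int}\,D$ the link cycle of $v$ in $D_v$ is mapped by $f$ to a genuine cycle in $X_{f(v)}$ of the same length $\hi(v)$.

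I would then estimate curvatures. For an interior vertex $v$, if $\hi(v) < k$, the image cycle of length $\hi(v)$ in the $k$-large link $X_{f(v)}$ has a diagonal, and I would use this diagonal to perform a standard star-replacement move (collapse the shorter ``fan'' of triangles at $v$ into a single triangle across the diagonal), producing a filling of strictly smaller area and contradicting minimality. Hence $\hi(v) \geq k$ and $\ku(v) = 6-\hi(v) \leq 6-k$ at every interior vertex. For a boundary vertex $v$ with $m \geq 4$, if $\hi(v) = 1$ then $v$ lies in a single triangle $[v,v',v'']$ whose two other vertices are the $\partial D$-neighbours of $v$; the edge $f([v',v''])$ would then connect nonconsecutive vertices of $\gamma$, producing a forbidden diagonal. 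Thus $\hi(v) \geq 2$ and $\kub(v) \leq 1$ at every boundary vertex. Combinatorial Gauss--Bonnet now gives
\[
6 \;=\; \sum_{v \in \partial D} \kub(v) + \sum_{v \in \mathrm{int}\,D} \ku(v) \;\leq\; m + (6-k)\,|\mathrm{int}\,D|,
\]
i.e.\ $(k-6)|\mathrm{int}\,D| \leq m - 6$. For $k = 6$ this already forces $m \geq 6$, contradicting $m < 6$; for $k \geq 7$ it forces $|\mathrm{int}\,D| < 1$, so $D$ is a triangulated $m$-gon with no interior vertex, and then any interior edge of $D$ is a chord of the polygon whose $f$-image is a diagonal of $\gamma$, a contradiction. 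The remaining case $m = 3$ (nonflagness) is handled similarly: $D$ cannot consist of a single triangle (this would supply the missing $2$-simplex), hence some boundary vertex has $\hi(v) \geq 2$ and the same Gauss--Bonnet bound rules this case out.

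The main obstacle I anticipate is the surgery step at interior vertices of small valence: one must justify that a diagonal in $X_{f(v)}$ actually lifts to a simplicial modification of $D$ reducing the area. This requires $f$ to be sufficiently nondegenerate around $v$ so that the link cycle is embedded in $X_{f(v)}$, which in practice is arranged by minimising an appropriate secondary complexity of $(D,f)$ on top of the area.
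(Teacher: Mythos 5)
Your overall strategy---minimal-area filling discs plus combinatorial Gauss--Bonnet---is exactly the paper's, and the curvature bookkeeping is right. Two steps need repair. First, the surgery at an interior vertex $v$ with $\hi(v)<k$: a single diagonal $[f(w_i),f(w_j)]$ of the link cycle cuts off a sub-polygon $w_i,w_{i+1},\dots,w_j$ which in general has more than three sides, so you cannot literally ``collapse the shorter fan into a single triangle''---the cut-off polygon still has to be triangulated and mapped simplicially into $X$, and its interior vertices $w_{i+1},\dots,w_{j-1}$ cannot simply be deleted since they meet triangles outside the star of $v$. The correct move (the paper's Lemma~\ref{lem:shortloop}) iterates the diagonal-finding: any simplicial map from a cycle of length $<k$ into a $k$-large complex extends over a triangulated disc with \emph{no} interior vertices, and substituting such a disc for the star of $v$ replaces $\hi(v)$ triangles by $\hi(v)-2$. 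Note that this lemma applies to arbitrary simplicial maps, not only embedded cycles: if two nonconsecutive link vertices have equal or adjacent images, that is precisely the pair one splits along. Consequently the ``secondary complexity'' you anticipate needing to make the link cycle embed in $X_{f(v)}$ is unnecessary; minimality of area alone suffices, and the paper obtains nondegeneracy of the minimal diagram as a separate consequence rather than as an input.

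Second, your reduction of non-$k$-largeness to ``a short diagonal-free cycle or three pairwise adjacent vertices not spanning a $2$-simplex'' is incomplete: flagness can fail only in dimension $\geq 3$ (the boundary of a $3$-simplex has all pairwise adjacencies and all $2$-faces but no $3$-simplex). You need the observation that, since all links of $X$ are flag by local $k$-largeness, flagness of $X$ follows from flagness of $X^2$ by induction on dimension; only then does your $m=3$ Gauss--Bonnet argument finish the job. With these two repairs your proof coincides with the paper's.
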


In order to prove this theorem, we need to introduce the notion of a \emph{filling diagram}. Filling diagrams together with the Gauss--Bonnet Theorem will be our fundamental tools in this article. Before introducing filling diagrams we prove the following lemma.

\begin{lem}
\label{lem:shortloop}
Let $X$ be a $k$--large simplicial complex, and let $S_m^1$ denote the triangulation of the circle that consists of $m$ edges, where $m<k$.
Then for any simplicial map $f_0\colon S_{m}^{1} \to X$ there exists a simplicial map $f\colon D \to X$, where $D$ is a triangulated $2$--disc, such that $\partial D = S_{m}^{1}$, $f|_{\partial D}=f_0$ and $D$ has no interior vertices.
\end{lem}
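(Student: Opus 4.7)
I would prove this by induction on $m$, for $3 \leq m < k$. In the base case $m=3$, the three vertices $v_0,v_1,v_2$ of $S_3^1$ are pairwise joined by edges, so their images $w_i := f_0(v_i)$ are pairwise either equal or joined by an edge of $X$. In every configuration (all distinct, two equal, or all three equal), the set $\{w_0,w_1,w_2\}$ spans a simplex of $X$ — the fully distinct case using flagness of $X$. Hence $D = [v_0,v_1,v_2]$ with $f$ agreeing with $f_0$ on vertices does the job.

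For the inductive step, assume the conclusion for all lengths strictly less than $m$, label the vertices of $S_m^1$ cyclically as $v_0,\ldots,v_{m-1}$, and set $w_i = f_0(v_i)$. I split into two scenarios. \emph{Scenario 1:} there is a chord of $S_m^1$ joining nonconsecutive vertices $v_i,v_j$ whose images either coincide or bound an edge of $X$. This is automatic when $f_0$ is injective on vertices, since then $f_0(S_m^1)$ is an embedded cycle of length $m<k$ in $X$, which by $k$-largeness admits a diagonal; and it is also available whenever $w_i = w_j$ for some nonconsecutive pair $v_i,v_j$, by taking the chord $[v_i,v_j]$ directly. Adding this chord cuts $S_m^1$ into two cycles of lengths in $\{3,\ldots,m-1\}$; extending $f_0$ to each by mapping the chord either to the edge $[w_i,w_j]$ or degenerately to the single vertex $w_i$ produces simplicial maps on cycles of length strictly less than $m$, to which I apply the inductive hypothesis, then glue the two fillings along the chord. \emph{Scenario 2:} every coincidence $w_i=w_j$ occurs at consecutive indices, say $w_i=w_{i+1}$. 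I collapse the edge $[v_i,v_{i+1}]$ by forming the $(m-1)$-cycle $\tilde S$ obtained from $S_m^1$ by deleting $v_{i+1}$ and joining $v_i$ directly to $v_{i+2}$, with the new edge mapped to $[w_{i+1},w_{i+2}] = [w_i,w_{i+2}]$; by induction $\tilde S$ admits a filling $\tilde D$ without interior vertices, and I recover $D$ by attaching the $2$-simplex $[v_i,v_{i+1},v_{i+2}]$ along $[v_i,v_{i+2}]$, extending $f$ so that this new simplex collapses to $[w_i,w_{i+2}]$.

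In every case one checks directly that $D$ has no interior vertices: the inductively produced pieces have none by hypothesis; the chord (or the replaced edge $[v_i,v_{i+2}]$) becomes an interior edge of $D$ but keeps both endpoints on $\partial D$; and in Scenario 2 the re-introduced vertex $v_{i+1}$ also sits on $\partial D$. The main obstacle is not in any single geometric estimate but in the bookkeeping: the chief subtlety is to recognize that a coincidence $w_i=w_j$ at nonconsecutive indices must be treated as a ``free'' virtual diagonal, so that the $k$-largeness hypothesis is invoked only in the genuinely embedded case where $f_0$ is injective on vertices.
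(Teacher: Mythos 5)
Your proof is correct and follows essentially the same route as the paper: induct on $m$, find a chord between nonconsecutive vertices of $S_m^1$ whose images coincide or span an edge, cut the cycle into two shorter ones and glue the inductively obtained fillings along the chord. The only divergence is your Scenario 2: the paper absorbs the case of a consecutive coincidence $w_i=w_{i+1}$ into the same mechanism (since then $v_{i-1}$ and $v_{i+1}$ already form a nonconsecutive pair whose images coincide or are joined by an edge), so no separate edge-collapse is needed; your collapse argument is nonetheless valid, and it makes explicit a degenerate case that the paper's one-line claim glosses over.
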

\begin{proof} We proceed by induction on $m$. If $m=3$ then we get the required extension, since $X$ is flag. Assume that $m>3$ and label vertices of the triangulation of $S_{m}^{1}$ by $(v_1,\ldots, v_m)$. Then by the fact that $X$ is $k$--large and that $m<k$, there are two nonconsecutive vertices, say $v_i, v_j \in S_{m}^{1}$ with $i<j$, such that their images under $f_0$ either coincide, or are connected by an edge. Add to $S_{m}^{1}$ an edge $[v_i, v_j]$. Then $S_{m}^{1} \cup [v_i, v_j]$ is the union of two cycles $S_{A}^1=(v_1,\ldots,v_i,v_j,\ldots, v_n)$ and $S_{B}^{1}=(v_i,\ldots,v_j)$. Note that by the choice of $v_i$ and $v_j$, the restrictions of $f_0$ to $S_{A}^{1}$, respectively to $S_{B}^{1}$, denoted by $f_0^A$, respectively $f_0^B$ are well defined simplicial maps. Since $v_i$ and $v_j$ are nonconsecutive, both $S_{A}^{1}$, $S_{B}^{1}$ are shorter than $m$, and hence by inductive hypothesis we have maps $f^A\colon D_{A} \to X$ and $f^B\colon D_{B} \to X$ extending respectively $f_0^A$ and $f_0^B$, such that neither $D_A$ nor $D_B$ has an interior vertex. Gluing $D_{B}$ and $D_{A}$ along the edge $[v_i, v_j]$, and putting $f = f_A \cup f_B$ gives us the required extension of $f_0$ since $D = D_{A}\cup D_{B}$ has no interior vertices.
\end{proof}

\begin{defi}Let $\gamma$ be a cycle in a simplicial complex $X$. A \emph{filling diagram} for $\gamma$ is a simplicial map $f\colon D \to X$, where
$D$ is a triangulated $2$--disc, and $f|_{\partial D}$ maps $\partial D$ isomorphically onto $\gamma$. A filling diagram $f\colon D \to X$ for $\gamma$ is called:
\begin{itemize}
\item
\emph{minimal area} (or \emph{minimal}) if $D$ consists of the least possible number of triangles ($2$--simplices) among filling diagrams for $\gamma$,

\item
\emph{locally $k$--large} if $D$ is a locally $k$--large simplicial complex,

\item
\emph{nondegenerate} if $f$ is a nondegenerate map.

\end{itemize}
\end{defi}

\begin{uwg}We do not assume that $D$ is a simplicial complex, i.e.\ it may have multiple edges and loops.  Consequently the attaching maps of $2$--cells of $D$ can be loops or (not necessarily embedded) cycles of length $2$ or $3$. We will still call this cell structure a ``triangulation''.

A \emph{simplicial map} for a non-simplicial $D$ is defined in the analogous way as in the simplicial case. In particular, a loop is always collapsed to a vertex, a pair of double edges is mapped to a single vertex or edge, and any $2$--cell whose boundary is not an embedded triangle is collapsed to a vertex or an edge.

Observe that a diagram can be locally $k$--large only if $D$ is simplicial, as required in the definition. In fact non-simplicial diagrams appear only in the proof of Theorem~\ref{twr:istniejediagram}.

Finally, note that for a simplicial $2$--disc, being locally $k$--large is equivalent to saying that every interior vertex is contained in at least $k$ triangles.
\end{uwg}


\begin{twr}
\label{twr:istniejediagram}
Let $\gamma$ be a homotopically trivial cycle in a locally $k$--large complex $X$. Then we have the following:
\begin{itemize}
\item[\emph{(1)}] there exists a filling diagram for $\gamma$,

\item[\emph{(2)}] any minimal filling diagram for $\gamma$ is simplicial, locally $k$--large and nondegenerate.
\end{itemize}
\end{twr}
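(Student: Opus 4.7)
For part~(1), I will use simplicial approximation. Since $\gamma$ is null-homotopic in $X$, it extends to a continuous map $F \colon D^2 \to X$ from the topological disc. Triangulate $D^2$ so that $\partial D^2$ is already subdivided into $|\gamma|$ edges and $F|_{\partial D^2}$ is the simplicial map $\gamma$. By the simplicial approximation theorem applied relative to $\partial D^2$, after iterated barycentric subdivisions of the interior one obtains a simplicial map $f \colon D \to X$ homotopic to $F$ and agreeing with $\gamma$ on $\partial D$; this is the required filling diagram.

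For part~(2), let $f \colon D \to X$ be a minimal filling diagram with $|D|$ triangles. I will prove each of the three properties by showing that its failure yields a filling diagram of strictly smaller area, contradicting minimality. If $D$ has a loop edge or two parallel edges between the same pair of vertices, the corresponding feature bounds a sub-disc $D_0 \subseteq D$ whose $f$-image collapses to a vertex (loop case) or to a single edge traversed back and forth (bigon case); excising $D_0$ and replacing it by a vertex, respectively an edge, produces a smaller filling diagram. If $f$ is degenerate at some edge $[u,v]$, i.e.\ $f(u)=f(v)$, then since $f|_{\partial D}$ is an isomorphism onto $\gamma$ the vertices $u$ and $v$ cannot both lie on $\partial D$, so $[u,v]$ is interior and is contained in exactly two triangles $[u,v,w_1]$ and $[u,v,w_2]$. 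Collapsing $[u,v]$ by identifying $u$ with $v$ eliminates both of these triangles while preserving the disc topology and the boundary $\gamma$, reducing $|D|$ by two.

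For local $k$-largeness, suppose some interior vertex $v_0$ of $D$ has link $\mathrm{lk}_D(v_0)=(u_1,\ldots,u_m)$ with $m<k$; then $\mathrm{star}_D(v_0)$ consists of exactly $m$ triangles. Since $f$ is simplicial, the assignment $u_i \mapsto f(u_i)$ defines a simplicial map $S^1_m \to X_{f(v_0)}$, and $X_{f(v_0)}$ is $k$-large by the local $k$-largeness of $X$. Applying Lemma~\ref{lem:shortloop} inside $X_{f(v_0)}$ yields a filling disc $D'$ of this map with no interior vertices, hence consisting of exactly $m-2$ triangles. Excising $\mathrm{star}_D(v_0)$ from $D$ and gluing in $D'$ along the common boundary cycle produces a filling diagram with $|D|-2$ triangles, contradicting minimality.

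The main delicate point will be verifying that each of the three surgeries genuinely returns a filling diagram, i.e.\ a topological disc whose boundary maps isomorphically onto $\gamma$. The local $k$-largeness step is transparent since one sub-disc is swapped for another sharing the same boundary cycle. The trickier cases are the simplicial and nondegenerate ones, where the hypothesis that $f|_{\partial D}$ is an isomorphism onto $\gamma$ is used precisely to ensure that the collapses take place strictly in the interior of $D$ and therefore preserve both the disc topology and the boundary word $\gamma$.
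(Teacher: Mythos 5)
Your part (1) is the paper's argument verbatim (relative simplicial approximation), and three of your four surgeries for part (2) — removing the region between two parallel edges, collapsing a degenerate interior edge together with its two adjacent triangles, and replacing the star of an interior vertex of small link via Lemma~\ref{lem:shortloop} — are exactly the paper's, with the same use of minimality and the same (correct) appeal to simpliciality of $X$ to see that the reglued map is well defined.

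The step that does not work as stated is the loop case. If $e$ is a loop at a vertex $v$ bounding a sub-disc $D_e$, then ``excising $D_e$ and replacing it by a vertex'' amounts to collapsing the circle $e$ to the point $v$; but $e$ is also an edge of the unique triangle $T$ lying outside $D_e$, and after the collapse $T$ degenerates into a bigon, so the result is not a triangulated disc and is not yet a filling diagram of smaller area. (Relatedly, your justification that the $f$-image of $D_e$ ``collapses to a vertex'', and that the image of the region between two parallel edges ``collapses to a single edge traversed back and forth'', is false and not needed: these sub-discs can map onto large portions of $X$; the only fact used is that the two boundary edges have equal image because $X$ has no loops or double edges.) The paper sidesteps the problematic collapse entirely: having first excluded double edges, it chooses a \emph{maximal} loop $e$ and inspects the triangle $T$ adjacent to $e$ outside $D_e$ — its two remaining edges are either both embedded, producing a forbidden double edge, or both loops, one of which bounds a disc properly containing $e$, contradicting maximality. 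To repair your version you would have to continue the surgery through $T$ (collapsing the resulting bigon as well) and rule out that its remaining edges are again loops, which in effect forces the same maximality argument; it is cleaner simply to adopt the paper's contradiction.
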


\begin{proof}
(1) Triangulate $S^1$ with $|\gamma|$ edges, and define $f_0\colon S^1 \to X$ as a simplicial isomorphism from $S^1$ to $\gamma$. Since $\gamma$ is homotopically trivial, the map $f_0$ extends to a map $f\colon D \to X$, where $D$ is a $2$--disc. 
Using relative Simplicial Approximation Theorem \cite[Section 3.4]{spanier} we get that $D$ can be given a triangulation which extends the triangulation of $S^1=\partial D$ and we get a simplicial map $f_1\colon D \to X$, which agrees with $f_0$ on $S^1$. This proves the existence.

(2) Let $f\colon D \to X$ be a minimal filling diagram for $\gamma$. We will show that $D$ is simplicial, locally $k$--large and nondegenerate.

First we prove that $D$ is simplicial. We have to show that there are no double edges and no loops in $D$. We proceed by contradiction. First suppose that we have two edges $e$ and $e'$ joining vertices $v_1$ and $v_2$. Then we can remove a subdisc bounded by $e \cup e'$, and glue $e$ to $e'$, which gives us a triangulation of $D$ with fewer triangles, together with a simplicial map induced by $f$. Since $X$ is simplicial, the images of $e$ and $e'$ under $f$ coincide, hence the induced map is well defined. This contradicts the minimality of $D$.

Now assume that we have a loop in $D$. Then there exists a maximal loop, i.e.\ a loop which is not properly contained in the disc bounded by any other loop. Pick any such loop and call it $e$. Let $D_e$ denote the disc bounded by $e$, and let $T$ be the triangle adjacent to $e$ outside $D_e$. Then we have two possibilities: either two other edges of $T$ are embedded, or they are both loops. Both situations are shown in the Figure~\ref{p:26}:

\begin{figure}[!h]
\centering
\begin{tikzpicture}[scale=1]

\draw (-2-1,1) to [out=-30, in=0] (-2-1, -1);
\node [below] at (-2-1,0.2) {$D_e$};
\draw (-2-1,1) to [out=180+30, in=180] (-2-1, -1);
\node [below] at (-3,-1.1) {$T$};
\node at (-3.75, -0.6) {$e$};
\draw (-2-1,1) to [out=-30, in=90] (-1-1, -0.5);
\draw (-2-1,1) to [out=180+30, in=90] (-3-1, -0.5);
\draw (-1-1,-0.5) to [out=270, in=30] (-2-1,-2);
\draw (-3-1,-0.5) to [out=270, in=150] (-2-1,-2);
\node  at (-3.5-1,1) {a)};

\node  at (0.5-1,1) {b)};

\draw (1,1-0.8) to [out=-30, in=0] (1, -0.5-0.8);
\node [below] at (1,-0.3) {$D_e$};
\node at (1, -1.55) {$e$};
\draw (1,1-0.8) to [out=180+30, in=180] (1, -0.5-0.8);
\draw (1,1-0.8) to [out=30, in=90] (2.5, 1-0.8);
\draw (1,1-0.8) to [out=-30, in=270] (2.5, 1-0.8);
\node [below] at (2,-0.7) {$T$};
\draw (1,1-0.8) to [out=180, in=90] (0.1, 0-0.8);
\draw (0.1, 0-0.8) to [out=270,in=180] (1+0.1, -1.5+0.5-1);
\draw (1+0.1,-1.5+0.5-1) to [out=0, in=270] (3, 0);
\draw (3,0) to [out=90, in=0] (2, 1);
\draw (2,1) to [out=180, in=90] (1, 0.2);
\end{tikzpicture}
\caption{Two possibilities for the loop $e$.}
\label{p:26}
\end{figure}
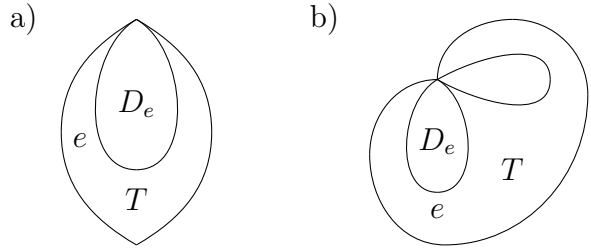

In situation a) two other sides of $T$ have the same endpoints, which is a contradiction with the fact that we do not have double edges.  In situation b) the disc bounded by one of the two other sides of $T$ contains $e$, which contradicts the maximality of $e$.

Now we show that $D$ is locally $k$--large. We do this by showing that there are no interior vertices of $D$ with $\hi(v)<k$.
First observe that in $D$ there are no vertices with $\hi(v) \in \{1,2\}$. Indeed a vertex $v$ with $\hi(v)=1$ would force $D$ to have a loop, and a vertex $v$ with $\hi(v)=2$ would give a loop or two edges with the same endpoints in $D$.

Assume then that we have a vertex $v \in \mat{int} D$ with $3\leq \hi(v) < k$. Then the link $D_v$ is a cycle of length $\hi(v)$. Consider the restriction $f|_{D_v}\colon D_v \to X_{f(v)}$ of $f$. 
It is a simplicial map from a cycle of length $\hi(v) <k$ to the $k$--large complex $X_v$, hence by Lemma~\ref{lem:shortloop} we get a triangulation of a subdisc $D_0$ bounded by $D_v$ with no interior vertices, and a simplicial map $f_0\colon D_0 \to X$, such that $f_0$ agrees with $f$ on $D_v$. Thus we can replace the triangulation of $D_0$ with the one given by Lemma~\ref{lem:shortloop}, and we define a map $f_1$ to be $f_0$ on $D_0$ and $f$ elsewhere. This yields the filling diagram $f_1$ for $\gamma$ with fewer simplices than $D$, which is a contradiction. Hence $D$ is locally $k$--large.

The last part is to prove that $D$ is nondegenerate. Assume the contrary. Then it follows that there is an edge $e$ which is mapped by $f$ to a vertex. Take two triangles containing $e$, delete the interior of their union and glue remaining $4$ edges pairwise. Since $X$ is simplicial, the values of $f$ on the glued edges coincide, hence the map induced by the gluing is well defined. This results in a smaller diagram for $\gamma$.\end{proof}

We are now ready to prove Theorem~\ref{twr:localtoglobal}.

\begin{proof}[Proof of Theorem~\ref{twr:localtoglobal}]
First we show that $X$ satisfies the $k$--largeness condition, then we show flagness of $X$. Let $\gamma$ be a cycle in $X$ with no diagonal. We need to show that $|\gamma| \geq k$. Since $X$ is simply connected, by Theorem~\ref{twr:istniejediagram} there exists a locally $k$--large diagram $f\colon D \to X$ for $\gamma$. Then we have the following:
\begin{itemize}
\item the disc $D$ has at least one interior vertex, because $\gamma$ has no diagonal,
\item interior vertices of $D$ are contained each in at least $k$ triangles, because $D$ is locally $k$--large,
\item boundary vertices are contained each in at least $2$ triangles, that is again because there is no diagonal in $\gamma$.
\end{itemize}
In the notation from the Gauss--Bonnet Theorem, the above conditions give us the following inequalities:
\begin{itemize}
\item[(1)]
for a vertex $v \in \partial D$ we have $\hi(v) \geq 2$, so $\kub(v) \leq 1$, and since $|\partial D|=|\gamma|$, we obtain: 
\[\sum_{v \in \partial D} \kub(v) \leq \sum_{|\gamma|} 1=|\gamma|,\]

\item[(2)]
 for a vertex $v \in \mathrm{int}D$ we have $\hi(v) \geq k$, so $\ku(v) \leq 6-k$, and since $k \geq 6$, all terms $\ku(v)$ are nonpositive. Because $\mathrm{int}D$ contains at least one vertex, we get: 
 \[\sum_{v \in \mathrm{int}D} \ku(v)\leq \sum_{v \in \mathrm{int}D} (6-k)\leq (6-k).\]
\end{itemize}

Putting (1) and (2) together into the Gauss--Bonnet formula gives us:
\[6=6\chi(D)= \sum_{v \in \partial D} \kub(v)+ \sum_{v \in \mathrm{int} D} \ku(v) \leq |\gamma| + (6-k).\]

Hence $|\gamma| \geq  6-(6-k)=k$.

Now we show flagness of $X$. Note that since all links of $X$ are by definition flag, it suffices to show that the $2$--skeleton $X^{(2)}$ is flag. Indeed, assume that we have the $1$--skeleton of an $n$--simplex $[v_1,\ldots,v_n]$ in $X$. Then if $X^{(2)}$ is flag, for any vertex $v_i$ we have the $1$--skeleton of an $(n-1)$--simplex in $X_{v_i}$, which by flagness of the link gives an $n$--simplex $[v_1,\ldots,v_n]$ in $X$.

To show that $X^{(2)}$ is flag, let $(v_1,v_2,v_3)$ be a cycle of length $3$ in $X$. We need to show that this cycle bounds a single $2$--simplex of $X$. The complex $X$ is simply connected, so $(v_1,v_2,v_3)$ is homotopically trivial. Thus by Theorem~\ref{twr:istniejediagram} it has a locally $k$--large filling diagram $f\colon D \to X$. If the disc $D$ is a single triangle then we are done, so let us assume that it consists of at least $2$ triangles.
In this case vertices $v_1, v_2, v_3$ are contained each in at least $2$ triangles, hence $\kub(v_i) \leq 1$ for $i \in \{1,2,3\}$. For a vertex $v$ in the interior of $D$, by $k$---largeness we have $\ku(v) \leq 0$. Hence applying the Gauss--Bonnet Theorem we get:
\[6=6\chi(D)= \kub(v_1)+\kub(v_2)+\kub(v_3)+ \sum_{v \in \mathrm{int} \Delta} \ku(v) \leq 3,\]
a contradiction. Therefore $D$ must consist of precisely $1$ triangle, which means that $[v_1,v_2,v_3] \subset X$.
\end{proof}
After this set-up we introduce \emph{systolic complexes}, the main object of our further discussion.

\begin{defi}Let $k \geq 4$. A simplicial complex $X$ is $k$--\emph{systolic} if it is locally $k$--large, connected and simply connected. For $k=6$ we abbreviate $6$--systolic to \emph{systolic}.
\end{defi}

Note that if $k \geq 6$ then Theorem~\ref{twr:localtoglobal} implies that a $k$--systolic complex is $k$--large, so in particular it is flag. Hence any $k$--systolic complex is determined by its $1$--skeleton. Here we give some basic examples of $k$--large and $k$--systolic complexes:
\begin{itemize}
\item
 If $X$ is a graph, then $X$ is $k$--large if and only if $X$ has no cycle of length less than $k$, and $X$ is $k$--systolic if and only if it is a tree,
\item Equilaterally triangulated Euclidean plane 
is systolic,
\item Equilaterally triangulated hyperbolic plane, 
by triangles with angles $\frac{2\pi}{k}$ is $k$--systolic,
\item For any $k\geq 6$, any triangulation of the sphere $S^2$ is not $k$--large, hence it is not systolic (by Gauss--Bonnet),
\item The locally $6$--large triangulation of the torus:

\begin{figure}[!h]
\centering
\begin{tikzpicture}[scale=0.75]
\definecolor{lgray}{rgb} {0.850,0.850,0.850}



\draw [->, ultra thick] (-1,-2)--(1,-2);

\draw [->, ultra thick] (-1,2)--(1,2);

\draw [->, lgray, ultra thick] (2,0)--(1, 2);

\draw [->, lgray, ultra thick] (-1,-2)--(-2,0);

\draw [->, dashed, gray, ultra thick] (1,-2)--(2, 0);

\draw [->, dashed, gray, ultra thick] (-2,0)--(-1,2);


\draw (0.5,-1)--(0,0)--(-0.5,-1)--(-1,-2);
\draw (0.5,-1)--(0,-2)--(-0.5,-1)--(0.5,-1);

\draw (0.5,1)--(0,0)--(-0.5,1)--(-1,2);
\draw (0.5,1)--(0,2)--(-0.5,1)--(0.5,1);

\draw (0+1,0+2)--(-0.5+1,-1+2)--(-1+1,-2+2);
\draw (0.5+1,-1+2)--(0+1,-2+2)--(-0.5+1,-1+2)--(0.5+1,-1+2);

\draw (-1-1,-2+2)--(0-1,-2+2)--(1-1,-2+2);
\draw (0.5-1,-1+2)--(0-1,-2+2)--(-0.5-1,-1+2)--(0.5-1,-1+2);

\draw (-1+1,2-2)--(0+1,2-2)--(1+1,2-2);
\draw (0+1,0-2)--(-0.5+1,1-2)--(-1+1,2-2);

\draw (0.5+1,1-2)--(0+1,2-2)--(-0.5+1,1-2)--(0.5+1,1-2);

\draw (0.5-1,1-2)--(0-1,2-2)--(-0.5-1,1-2)--(0.5-1,1-2);

\end{tikzpicture}
\caption{The locally $6$--large torus. Links are $6$--cycles.}
\end{figure}
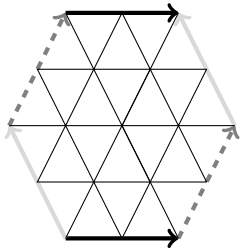
\item
In a similar way for any $g\geq2$, a closed oriented genus $g$ surface admits a locally $6$--large triangulation.
\end{itemize}

\section{7-systolic complexes are Gromov hyperbolic}\label{r:24}
The aim of this section is to prove that if $X$ is a $7$--systolic complex then $X^{(1)}$, the $1$--skeleton of $X$, with the standard metric is Gromov hyperbolic. The standard metric on $X^{(1)}$ is defined as follows. First declare each edge to be isometric to the Euclidean unit interval. Then for any two points $x_1, x_2 \in X^{(1)}$ define the distance between $x_1$ and $x_2$ to be the length of a shortest path between them, and denote it by $|x_1,x_2|$. Similarly, the length of a path $\gamma$ we denote by $|\gamma|$. Although this metric is defined for all points in $X^{(1)}$, most of the time we are interested in distances between vertices, and the distance between any two vertices is just the number of edges in the shortest path between them. Note that the length of a cycle defined in Section~\ref{r:21} is exactly its length in $X^{(1)}$--metric. Also note that any simplicial map is $1$--Lipschitz with respect to this metric.

\begin{defi}Let $(X,d)$ be a metric space and $x_1, x_2, x_3 \in X$.
\begin{itemize}

\item a \emph{geodesic} joining $x_1$ to $x_2$ is an isometry $\gamma: [0,d(x_1,x_2)] \to X$, such that $\gamma(0)=x_1$ and  $\gamma(d(x_1,x_2))=x_2$.

\item a space $(X,d)$ is called \emph{geodesic} if every two points of $X$ can be joined by a geodesic.

\item
 a \emph{geodesic triangle} in $X$ consists of three points and three geodesics joining these points. The images of these geodesics are called the \emph{sides} of this triangle.
\end{itemize}
\end{defi}



\begin{defi}A geodesic metric space $(X,d)$ is called $\delta$--\emph{hyperbolic} (or \emph{Gromov hyperbolic}) if there exists $\delta >0$, such that for every geodesic triangle $[x_1,x_2,x_3]$ in $X$, each side is contained in the union of $\delta$--neighbourhoods of the two other sides.
\end{defi}

\begin{kon}
Note that $X^{(1)}$ is a geodesic metric space, where geodesics are precisely the shortest paths between points.

From now on by \emph{geodesics} we always mean $X^{(1)}$--geodesics (even if we work in $X$, not $X^{(1)}$) and whenever we refer to metric, we mean $X^{(1)}$--metric. Moreover, since geodesics are injective maps, we may identify a geodesic with its image.

Finally, observe that any geodesic $\gamma$ whose endpoints are vertices, is uniquely determined by the sequence of vertices $(\gamma(0), \gamma(1), \ldots, \gamma(n))$.

\end{kon}

To prove the main theorem of this section we need a lemma which gives an estimate on the boundary curvature of a geodesic path. 

\begin{lem}
\label{lem:kubgeodesic}
Let $D$ be a simplicial $2$--disc, and let $\alpha$ be a geodesic in $D$ contained in $\partial D$. Then we have: \[\sum_{v \in \mathrm{int} \alpha} \kub(v) \leq 1,\] where for a path $\alpha$, by $\mathrm{int}\alpha$ we denote all vertices of $\alpha$ except for its endpoints.
\end{lem}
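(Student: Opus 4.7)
The plan is to reduce the claim to a geodesic inequality for a ``parallel'' edge-path running along the interior side of $\alpha$. Write $\alpha=(v_0,v_1,\ldots,v_n)$, set $h_i=\hi(v_i)$, and note
$$\sum_{v\in\mathrm{int}\,\alpha}\kub(v)=3(n-1)-\sum_{i=1}^{n-1}h_i,$$
so the lemma is equivalent to $\sum_{i=1}^{n-1}h_i\geq 3n-4$. The strategy is to extract this inequality from the fact that $\alpha$ is a geodesic in $D^1$.

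First I would analyse the links. Since each interior $v_i$ of $\alpha$ lies on $\partial D$, the link $D_{v_i}$ is a path from $v_{i-1}$ to $v_{i+1}$ with $h_i$ edges, whose vertices I would label $v_{i-1}=w_{i,0},w_{i,1},\ldots,w_{i,h_i}=v_{i+1}$; the triangles of $D$ at $v_i$ are exactly the $[v_i,w_{i,j},w_{i,j+1}]$. One sees immediately that $h_i\geq 2$: if $h_i=1$ then $[v_{i-1},v_{i+1}]$ would be an edge of $D$, providing a shortcut contradicting $\alpha$ being geodesic. Moreover the boundary edge $[v_i,v_{i+1}]\subset\partial D$ is contained in exactly one triangle of $D$; comparing the last triangle at $v_i$ with the first triangle at $v_{i+1}$ along this shared edge yields the identification $w_{i,h_i-1}=w_{i+1,1}$.

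Using these identifications I would concatenate the interiors of the link paths into a single edge-path
$$\beta' \colon v_0 \to w_{1,1}\to\cdots\to w_{1,h_1-1}=w_{2,1}\to\cdots\to w_{n-1,h_{n-1}-1} \to v_n$$
in $D^1$. Every consecutive pair of vertices in $\beta'$ spans an edge of $D$ by the triangle structure above, and the endpoint edges $[v_0,w_{1,1}]$ and $[w_{n-1,h_{n-1}-1},v_n]$ exist because they lie in the unique triangles adjacent to the boundary edges $[v_0,v_1]$ and $[v_{n-1},v_n]$ respectively. Link $D_{v_i}$ contributes $h_i-2$ edges to the interior of $\beta'$, so
$$|\beta'|=2+\sum_{i=1}^{n-1}(h_i-2)=\sum_{i=1}^{n-1}h_i-2n+4.$$

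Since $\alpha$ is a geodesic, $|v_0,v_n|=n$, so $|\beta'|\geq n$; rearranging gives $\sum h_i\geq 3n-4$ and hence the desired bound. I expect the main subtlety to be the degenerate case $h_i=2$, where the link $D_{v_i}$ has a single interior vertex $w_{i,1}=w_{i,h_i-1}$ and contributes $0$ edges to $\beta'$; the identification $w_{i,1}=w_{i+1,1}$ nevertheless keeps $\beta'$ continuous, so the count above is still valid and the argument goes through uniformly (trivial boundary cases $n\leq 1$ give an empty sum and nothing to prove).
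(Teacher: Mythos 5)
Your proof is correct, and it takes a genuinely different route from the one in the paper. The paper argues locally: it rules out interior vertices of $\alpha$ with $\hi(v)=1$, then rules out two consecutive vertices with $\hi(v)=2$, then shows that between any two vertices with $\hi(v)=2$ there must be one with $\hi(v)\geq 4$ (each time by exhibiting a shortcut), and finally does the bookkeeping $\#\{\hi=2\}\leq\#\{\hi\geq 4\}+1$ to conclude $\sum\kub(v)\leq 1$. You instead make a single global count: the concatenated links form an edge-path $\beta'$ from $v_0$ to $v_n$ running along the inner side of $\alpha$, of length exactly $\sum_i h_i-2n+4$, and the geodesic hypothesis forces $|\beta'|\geq n$, which is precisely equivalent to the claimed curvature bound. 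The geometric mechanism (no shortcut just inside $\alpha$) is the same, but your version packages all the local shortcut arguments into one inequality; it is arguably cleaner and more rigorous than the paper's step ``between two vertices in two triangles there must be a vertex in at least four triangles,'' which is stated somewhat loosely there. All the details you need check out: the link of a boundary vertex of a simplicial $2$-disc is indeed an arc from $v_{i-1}$ to $v_{i+1}$ with $\hi(v_i)$ edges, the identification $w_{i,h_i-1}=w_{i+1,1}$ follows because the boundary edge $[v_i,v_{i+1}]$ lies in a unique triangle, $h_i\geq 2$ handles the index bookkeeping, and the possible non-injectivity of $\beta'$ is harmless since any edge-path from $v_0$ to $v_n$ has length at least $|v_0,v_n|=n$.
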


\begin{proof}Since $\alpha$ is a geodesic, every vertex of $\mat{int}\alpha$  is contained in at least two triangles. If not, then part a) of Figure~\ref{p.shortcuts} shows that there exists a shortcut. Moreover, there cannot be two consecutive vertices each contained in two triangles. If that happens, again there is a shorter path between vertices adjacent to these two, which is shown in part b) of Figure~\ref{p.shortcuts}. And whenever there are two vertices, each contained in $2$ triangles, between them there has to be a vertex contained in at least $4$ triangles as in the part c) of Figure~\ref{p.shortcuts}, otherwise we would be able to find another shortcut. Hence the number of vertices contained in $2$ triangles is greater than the number of vertices contained in $4$ triangles at most by $1$. Since all other vertices are contained in $3$ or more triangles, the claim follows.\end{proof}

\begin{figure}[!h]
\centering
\begin{tikzpicture}[scale=1.125]
\draw (0,0) --(1,0);
\draw [ultra thick] (1,0) --(0.5,0.87) -- (0,0);
\node at (-0.25,1) {a)};
\node at (1.75,1) {b)};
\draw (2,0)--(3,0)--(4,0);
\draw [ultra thick] (2,0)--(2.5,0.87)--(3.5,0.87)--(4,0);
\draw [dashed] (2.5,0.87)--(3,0);
\draw  [dashed] (3,0)--(3.5,0.87);
\node at (4.75,1) {c)};
\draw [ultra thick] (5,0-1)--(5.5,0.87-1)--(6.5,0.87-1)--(7,1.74-1)--(8,1.74-1);
\draw (5,-1)--(6,0-1)--(7,0-1)--(7.5,0.87-1)--(8,1.74-1);
\draw [dashed] (5.5,0.87-1)--(6,0-1);
\draw [dashed] (6,0-1)--(6.5,0.87-1);
\draw [dashed] (6.5,0.87-1)--(7,0-1);
\draw [dashed] (6.5,0.87-1)--(7.5,0.87-1);
\draw [dashed] (7.5,0.87-1)--(7,1.74-1);

\draw [ultra thick] (0,-0.5)--(0.5,-0.5);
\node [right] at (0.5,-0.5) {a part of $\alpha$};
\node [right] at (0.5,-1) {a possible shortcut};
\draw (0,-1)--(0.5,-1);
\end{tikzpicture}
\caption{There are shortcuts in a) and b).}
\label{p.shortcuts}
\end{figure}
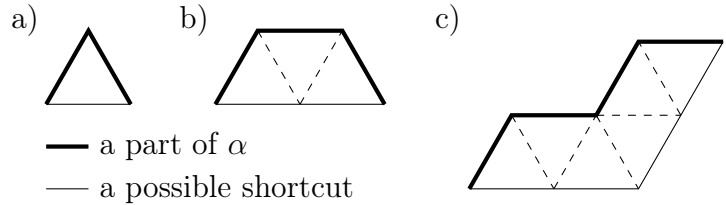


\begin{twr}Let $X$ be a $7$--systolic complex. Then $X^{(1)}$ is Gromov hyperbolic.
\label{twr:gromovhyp}
\end{twr}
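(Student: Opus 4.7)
My plan is to run the combinatorial Gauss--Bonnet argument on a minimal filling diagram of a geodesic triangle, and to exploit the strict negative curvature $\kappa\leq -1$ that local $7$-largeness forces at every interior vertex.

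Given a geodesic triangle $[x_1,x_2,x_3]$ with sides $\alpha_1,\alpha_2,\alpha_3$ in $X^1$, I form the null-homotopic cycle $\gamma=\alpha_1\cup\alpha_2\cup\alpha_3$ and pick a minimal filling diagram $f\colon D\to X$ for $\gamma$. By Theorem~\ref{twr:istniejediagram}, $D$ is simplicial, nondegenerate, and locally $7$-large. Because $f$ is simplicial and hence $1$-Lipschitz, and each of the three boundary arcs of $D$ maps isomorphically onto a geodesic $\alpha_i$ of $X^1$, every such arc is itself a geodesic in $D$: a shorter $D$-path between the endpoints of an arc would push forward under $f$ to an $X^1$-path beating $\alpha_i$, contradicting the choice of $\alpha_i$.

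I then read off Gauss--Bonnet. Lemma~\ref{lem:kubgeodesic} applied to each of the three geodesic arcs gives $\sum_{v\in\mathrm{int}\alpha_i}\kub(v)\leq 1$, contributing at most $3$ in total. At each corner $x_i$ one has $\hi(x_i)\geq 1$, so $\kub(x_i)\leq 2$, adding at most $6$ more. Local $7$-largeness forces $\hi(v)\geq 7$, hence $\kappa(v)\leq -1$, at every interior vertex of $D$. Writing $N=|\mathrm{int}\,D|$ and using $\chi(D)=1$,
$$6=\sum_{v\in\partial D}\kub(v)+\sum_{v\in\mathrm{int}D}\kappa(v)\leq 9-N,$$
so $N\leq 3$.

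It remains to convert this uniform cap on the interior vertex count into the slim-triangles condition. For any vertex $p$ on a side $\alpha_i$, I want to bound its $D$-distance (hence its $X^1$-distance, as $f$ is $1$-Lipschitz) to the union of the two other sides by a universal constant $\delta$. The natural route is a ball-growth argument: in a locally $7$-large $2$-disc the balls $B_D(p,r)$ should acquire new interior vertices at a controlled rate as $r$ grows, provided $B_D(p,r)$ has not yet reached the opposite sides, and combined with $N\leq 3$ this forces $r$ bounded. I expect this last step to be the main obstacle, since boundary vertices of $\alpha_i$ lying inside the ball do not carry the strict curvature bound, so the growth has to be extracted carefully; to do so I would cut $D$ along a shortest $D$-geodesic $\beta$ from $p$ to $\alpha_j\cup\alpha_k$, so that both sub-discs produced have boundaries made of three $D$-geodesics (portions of $\alpha_i$, of $\beta$, and of $\alpha_j$ or $\alpha_k$), and then apply the same Gauss--Bonnet plus Lemma~\ref{lem:kubgeodesic} estimate to each sub-disc to squeeze out an explicit bound on $|\beta|$ from the constraint $N\leq 3$.
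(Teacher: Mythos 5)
Your main computation is the paper's: a minimal filling diagram for the triangle, Lemma~\ref{lem:kubgeodesic} on each side (with the correct observation that the boundary arcs of $D$ are $D$-geodesics because $f$ is $1$-Lipschitz and isometric on $\partial D$), $\kub\leq 2$ at the corners, $\ku\leq -1$ at interior vertices, hence $|\mathrm{int}\,D|\leq 3$. One smaller gap: you fill $\alpha_1\cup\alpha_2\cup\alpha_3$ as if it were an embedded cycle, but a geodesic triangle in $X^1$ can self-intersect; the paper first decomposes it into embedded geodesic bigons and an embedded geodesic triangle (the bigon case, run identically, gives $|\mathrm{int}\,D|=0$ and $\tfrac32$-thinness), and at the end reduces triangles with non-vertex corners to geodesic hexagons with vertices in $X^0$.

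The genuine problem is the mechanism you propose for converting $|\mathrm{int}\,D|\leq 3$ into thinness. Cutting $D$ along a shortest $D$-path $\beta$ from $p\in\alpha_i$ to $\alpha_j\cup\alpha_k$ produces one triangular piece and one quadrilateral piece (not two triangles), and, more importantly, Gauss--Bonnet on the pieces yields no bound on $|\beta|$: an interior vertex $v$ of $\beta$ contributes $(3-\hi_{D'}(v))+(3-\hi_{D''}(v))=6-\hi_{D}(v)=\ku_D(v)$ to the sum of the two formulas and each endpoint of the cut contributes exactly $3$ more than before, so adding the two identities just returns $12=6+3+3$; and in each piece the vertices of $\beta$ are boundary vertices, where Lemma~\ref{lem:kubgeodesic} controls their total curvature but not their number. (There is also the complication that $\beta$ may revisit $\alpha_i$, in which case the cut does not produce two discs.) The step that is actually needed is a direct adjacency argument in $D$: the link of a vertex $v$ of $\alpha_i$ is a path whose intermediate vertices cannot lie on $\alpha_i$ (an edge between non-consecutive vertices would shortcut the $D$-geodesic $\alpha_i$), so $v$ is adjacent to an interior vertex of $D$ or to $\alpha_j\cup\alpha_k$; and an interior vertex of $D$ has degree at least $7$, at most $3$ neighbours on any single geodesic side (any two of its neighbours are at $D$-distance at most $2$) and at most $2$ interior neighbours, so it must be adjacent to $\alpha_j\cup\alpha_k$. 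Together with $|\mathrm{int}\,D|\leq 3$ this bounds the distance from any point of $\alpha_i$ to $\alpha_j\cup\alpha_k$ by a universal constant ($4\tfrac12$ certainly suffices), and the $1$-Lipschitz map $f$ transports the bound to $X^1$.
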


\begin{proof}
The proof is due to Januszkiewicz and \'{S}wi\k{a}tkowski \cite[Theorem~2.1]{januszsimplicial}. We need to show that geodesic triangles in $X^{(1)}$ are $\delta$--thin for some $\delta > 0$. First we show that this condition is satisfied by geodesic triangles with vertices in $X^{(0)}$. Note that a geodesic triangle in $X^{(1)}$ can have self-intersections, so it splits into finitely many embedded geodesic bigons and an embedded geodesic triangle, joined by their endpoints or joined by some geodesics between their endpoints.

So to show $\delta$--thinness of any geodesic triangle with vertices in $X^{(0)}$ it is enough to prove that embedded geodesic bigons and embedded geodesic triangles are $\delta$--thin.
Let $\alpha_1 \cup \alpha_2$ be a bigon formed by two geodesics $\alpha_1$ and $\alpha_2$ with common endpoints. Then since $X$ is simply connected, by Theorem~\ref{twr:istniejediagram} there exists a minimal area filling diagram for $\alpha_1 \cup \alpha_2$. Denote this diagram by $f\colon D \to X$. Since $f$ is an isomorphism on $\partial D$ we keep denoting the preimages $f^{-1}(\alpha_i)$ by $\alpha_i$. Now we work in the disc $D$. By Lemma~\ref{lem:kubgeodesic} we have $\sum_{v \in \mathrm{int} \alpha_i} \kub(v) \leq 1$ for both $\alpha_1$ and $\alpha_2$. For $v_0$ and $v_1$, the endpoints of these geodesics, we have $\hi(v_i) \geq 1$ so $\kub(v_i) \leq 2$ for $i\in \{0,1\}$. And finally by Theorem~\ref{twr:istniejediagram} the disc $D$ is locally $7$--large, so for the interior vertices we have $\hi(v) \geq 7$, hence $\ku(v) \leq -1$. Thus by the Gauss--Bonnet formula we get:

\begin{align*}6=6\chi(D)=&~\kub(v_0) +\kub(v_1)+ \sum_{v \in \mathrm{int} \alpha_1} \kub(v) + \sum_{v \in \mathrm{int} \alpha_2} \kub(v) + \sum_{v \in \mathrm{int} D} \ku(v) \leq \\  
\leq&~ 2+2+1+1+ \sum_{v \in \mathrm{int} D} (-1)=6-|\mathrm{int}D|.
\end{align*}

We have $6 \leq 6 -|\mathrm{int}D|$ so $|\mathrm{int}D| \leq 0$. Thus there are no interior vertices in $D$, so every vertex of $\alpha_1$ is connected by an edge to some vertex of $\alpha_2$. This implies that every point from $\alpha_1$ is at distance at most $\frac{3}{2}$ from some vertex of $\alpha_2$ and vice versa, thus $\partial D$ is $\frac{3}{2}$--thin. Then since $f$ is $1$--Lipschitz, it cannot increase distances, hence a bigon $\alpha_1 \cup \alpha_2 \subset X^{(1)}$ is also $\frac{3}{2}$--thin.

Now we pass to geodesic triangles. Let $\alpha_1 \cup \alpha_2 \cup \alpha_3$ be such a triangle. Pick a minimal diagram $f\colon D \to X$ for $\alpha_1 \cup \alpha_2 \cup \alpha_3$. Now $\partial D$ consists of three geodesics: $\alpha_1, \alpha_2$ and $ \alpha_3$. Using the same estimates as above we get:
\begin{align*}6=6\chi(D) =&~ \kub(v_0) +\kub(v_1)+ \kub(v_2)+ \sum_{v \in \mathrm{int} \alpha_1} \kub(v)~+ \\ &+ \sum_{v \in \mathrm{int} \alpha_2} \kub(v) + \sum_{v \in \mathrm{int} \alpha_3} \kub(v) + \sum_{v \in \mathrm{int} D} \ku(v)\leq \\ \leq &~ 2+2+2+1+1+1+ \sum_{v \in \mathrm{int} D} (-1)=9-|\mathrm{int}D|.
\end{align*}
So we have $6 \leq 9  -|\mathrm{int}D|$, hence $|\mathrm{int}D| \leq 3$. There are at most $3$ interior vertices in $D$, thus every point of $\alpha_i$ is at distance at most $4\frac{1}{2}$ from some vertex of $\alpha_j \cup \alpha_k$ where $\{i,j,k\}=\{1,2,3\}$. So $\partial D$ seen as a geodesic triangle is $4\frac{1}{2}$--thin. Since $f$ is $1$--Lipschitz we get that $\alpha_1 \cup \alpha_2 \cup \alpha_3 \subset X^{(1)}$ is also $4\frac{1}{2}$--thin. Thus we proved that geodesic triangles with vertices in $X^{(0)}$ are $4\frac{1}{2}$--thin.

It can be shown that this fact implies that any geodesic triangle in $X^{(1)}$ is $\delta'$--thin, for some other $\delta' >0$. Indeed, given any geodesic triangle $[x_1,x_2,x_3]$ in $X^{(1)}$ one turns it into a geodesic $6$--gon with vertices in $X^{(0)}$, by taking edges containing $x_i$'s to be the new geodesics of length $1$. Then one can show that it is $\delta'$--thin by splitting it into $4$ geodesic triangles with vertices in $X^{(0)}$. Detailed proof can be found in \cite[Theorem~2.1]{januszsimplicial}. \end{proof}


\section{Projection Lemma for systolic complexes}\label{r:projection}
From now our research focuses only on $6$--systolic complexes. Thus we omit $6$ and write \emph{systolic}. An important result which we prove in this section, is the \emph{Projection Lemma}. This theorem, first proved by V. Chepoi~\cite{chepoialgo}
 is an extremely useful tool in working with systolic complexes. Both statement and proof are purely combinatorial: our proof relies on filling diagrams and the Gauss--Bonnet Theorem. Before proving the main theorem we need some preparation.

\begin{defi} Let $X$ be a simplicial complex and let $\sigma$ be a simplex of $X$. We define the \emph{residue} of $\sigma$ in $X$, denoted by $\mathrm{Res}(\sigma, X)$, to be the subcomplex of $X$ spanned by all simplices that contain $\sigma$.
\end{defi}

\begin{defi} Given a simplicial complex $X$ and a vertex $v_0 \in X$ we define the \emph{combinatorial ball} of radius $n$ centered at $v_0$ to be:
\[B_n(v_0, X)=  \mathrm{span}\{v \in X \ | \ |v,v_0| \leq n\},\]
and the \emph{combinatorial sphere}:
\[S_n(v_0, X)= \mathrm{span}\{v \in X \ | \ |v,v_0| = n\}.\]
\end{defi}

The idea of the Projection Lemma is to be able to project simplices contained in $S_n(v_0, x)$ onto $S_{n-1}(v_0, X)$ in a somehow nice way.

\begin{twr}{\emph{(Projection Lemma).}}
\label{twr:projectionlemma}
Let $X$ be a systolic complex and let $v_0$ be a vertex of $X$. Pick $n>0$. Then for any simplex $\sigma \subset S_n(v_0,X)$, the intersection $\mathrm{Res}(\sigma,X) \cap S_{n-1}(v_0, X)$ is a non-empty simplex. We call this simplex the projection of $\sigma$ onto  $S_{n-1}(v_0, X)$ and denote it by $\pi_{v_0}(\sigma)$.\end{twr}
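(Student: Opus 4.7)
\emph{Overall plan.} The plan is to proceed by induction on $n$. The base case $n=1$ is immediate: for $\sigma \subset S_1(v_0,X) = X_{v_0}$, flagness of $X$ (Theorem~\ref{twr:localtoglobal}) makes $\sigma \cup \{v_0\}$ a simplex, so $\pi_{v_0}(\sigma) = \{v_0\}$. The technical heart of the inductive step is the edge case, after which the general statement follows by induction on $\dim\sigma$, and the simplex property is upgraded from pairwise adjacency via flagness.

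\emph{Edge case.} Given an edge $[v_1, v_2] \subset S_n(v_0, X)$, I would choose geodesics $\gamma_1, \gamma_2$ from $v_0$ to $v_1, v_2$ sharing a maximal common initial segment; replacing $v_0$ by the last common vertex, I may assume $\gamma_1 \cap \gamma_2 = \{v_0\}$. Let $f \colon D \to X$ be a minimal-area filling diagram for the cycle $\alpha = \gamma_1 \cup [v_1, v_2] \cup \gamma_2^{-1}$; by Theorem~\ref{twr:istniejediagram}, $D$ is simplicial, locally $6$-large, and nondegenerate. Let $T = [v_1, v_2, u]$ be the unique triangle of $D$ containing the boundary edge $[v_1, v_2]$. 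If $u \in \partial D$, it must lie on $\gamma_1$ or $\gamma_2$, and the resulting edge $[u, v_i]$ in $X$ together with geodesicity of $\gamma_i$ forces $u$ to be the penultimate vertex, so $|f(u), v_0| = n-1$. If $u$ is interior, then $f(u)$ is adjacent to $v_1, v_2$ and $|f(u), v_0| \in \{n-1, n\}$; applying Combinatorial Gauss--Bonnet together with Lemma~\ref{lem:kubgeodesic} gives
\[6 \leq \kub(v_0) + \kub(v_1) + \kub(v_2) + \sum_{v \in \mathrm{int}\,\gamma_1}\kub(v) + \sum_{v \in \mathrm{int}\,\gamma_2}\kub(v) + \sum_{v \in \mathrm{int}\, D}\ku(v) \leq 8 + \sum_{v \in \mathrm{int}\, D}\ku(v),\]
so $\sum_{\mathrm{int}\,D}\ku(v) \geq -2$. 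A surgery argument would then exclude $|f(u), v_0| = n$: if it held, a geodesic $v_0 \to f(u)$ of length $n$ combined with a portion of $\gamma_1$ (or $\gamma_2$) and the edge $[v_1, f(u)]$ yields a shorter boundary cycle with a smaller-area filling, contradicting minimality of $D$.

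\emph{General case and simplex property.} Non-emptiness of $\pi_{v_0}(\sigma)$ for higher-dimensional $\sigma$ follows by induction on $\dim\sigma$, combining the inductive hypothesis on a codimension-$1$ face of $\sigma$ with the edge case applied to pairs. For the simplex property, given $w_1, w_2 \in \pi_{v_0}(\sigma)$ and any $v \in \sigma$, I would form the loop $\gamma_1 \cup [w_1, v] \cup [v, w_2] \cup \gamma_2^{-1}$ (where $\gamma_i$ are geodesics $v_0 \to w_i$ with maximal common initial segment) and apply the same filling diagram together with Gauss--Bonnet to force $\hi(v) = 1$ in the minimal diagram. This produces a triangle mapping to $[w_1, v, w_2]$ in $X$ and hence the edge $[w_1, w_2]$; flagness of $X$ then upgrades pairwise adjacency in $\pi_{v_0}(\sigma)$ to the conclusion that it is a single simplex.

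\emph{Main obstacle.} The hardest part is the surgery step in the interior subcase of the edge argument: the Gauss--Bonnet defect $8 - 6 = 2$ permits a few non-hexagonal interior vertices, and converting a hypothetical configuration $|f(u), v_0| = n$ into a strictly smaller filling diagram requires careful use of minimality of $D$ together with the maximal common initial segment choice of the geodesics. A parallel delicacy arises in forcing $\hi(v) = 1$ in the simplex-property argument.
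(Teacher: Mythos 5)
Your overall strategy coincides with the paper's: minimal filling diagrams plus Gauss--Bonnet for the edge/bigon cases, induction on $\dim\sigma$ for non-emptiness, and flagness to upgrade pairwise adjacency to the simplex property (your $\hi(v)=1$ argument for the simplex property is exactly the paper's Lemma~\ref{lem:geodesicbigon1}). The problem is the step you yourself flag as the main obstacle: in the edge case, when the third vertex $u$ of the triangle of $D$ containing $[v_1,v_2]$ is interior, your ``surgery'' does not produce a contradiction. Minimality of $D$ is minimality among fillings of the \emph{fixed} cycle $\alpha=\gamma_1\cup[v_1,v_2]\cup\gamma_2^{-1}$; exhibiting a shorter boundary cycle (through $f(u)$ and a geodesic $v_0\to f(u)$) that admits a small filling says nothing about fillings of $\alpha$, so it cannot contradict the minimality of $D$. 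Moreover, the target of the surgery is too strong: you are trying to show that the \emph{particular} vertex $u$ of the minimal diagram maps into $S_{n-1}(v_0,X)$, which is more than is needed and is not what the Gauss--Bonnet defect $\sum_{v\in\mathrm{int}D}\ku(v)\ge -2$ gives you (that bound by itself constrains nothing about $|f(u),v_0|$). The paper instead extracts from Gauss--Bonnet the inequality $\hi(v_0)+\hi(v_1)+\hi(v_2)\le 5$ and does a case analysis: either some $\hi(v_i)=1$, in which case $\gamma_{3-i}(1)\in S_{n-1}$ is already adjacent to both $v_1$ and $v_2$; or $\hi(v_1)=\hi(v_2)=2$, producing a common neighbour $v$ of $v_1,v_2,\gamma_1(1),\gamma_2(1)$, and then the possibility $|v,v_0|=n$ is not excluded but \emph{absorbed}: applying the geodesic-bigon lemma to the two geodesics $[v,\gamma_i(1)]\cup\gamma_i|_{[1,n]}$ shows that in that event $\gamma_1(1)$ and $\gamma_2(1)$ are adjacent and one falls back into the previous case. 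You need some version of this two-lemma structure; the surgery as sketched will not close.

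A secondary gap is the inductive step on $\dim\sigma$, which you dismiss as routine. Knowing that every edge of $S_n(v_0,X)$ has a non-empty projection gives you, for $\sigma=[v_1,\dots,v_k]$, a vertex $w_1\in S_{n-1}$ adjacent to $v_1,\dots,v_{k-1}$ and a vertex $w_2\in S_{n-1}$ adjacent to (say) $v_{k-1},v_k$, but these need not coincide, and you must merge them into a single vertex adjacent to all of $\sigma$. The paper does this by applying $6$-largeness to the $4$-cycles $(v_i,v_k,w_2,w_1)$: each must have a diagonal, and a short argument shows that either $w_1$ is adjacent to $v_k$ or $w_2$ is adjacent to every $v_i$. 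Some such diagonal-chasing (or the paper's refined version of the edge lemma, which records the extra adjacencies between $w_1$, $w_2$ and $v_{k-1}$, $v_k$) is an essential ingredient of the induction, not a formality.
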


\begin{uwg}
If it is clear in what sphere $\sigma$ is contained, we may also call $\pi_{v_0}(\sigma)$ the projection of $\sigma$ \emph{in the direction of} $v_0$.
\end{uwg}

Note that since $X$ is flag, the intersection $\mathrm{Res}(\sigma,X) \cap S_{n-1}(v_0, X)$ is spanned by the set of vertices in $S_{n-1}(v_0, X)$ which are connected by an edge to every vertex of $\sigma$. So to prove that $\mathrm{Res}(\sigma,X) \cap S_{n-1}(v_0, X)$ is a simplex, it is enough to show that any two vertices in this set are connected by an edge.
The proof is quite long so we divide it into  few steps. First we prove two lemmas about filling diagrams in systolic complexes, then we show that the projection of $\sigma$ is a simplex, and at the end that it is non-empty (this is the hardest part).

\begin{lem}
\label{lem:geodesicbigon1}
Let $\gamma_1 \cup \gamma_2$ be a geodesic bigon between vertices $v_0$ and $v$ in a systolic complex $X$, and let $\gamma_1(0)=\gamma_2(0)=v$. Then $|\gamma_1(1),\gamma_2(1)| \leq 1$, i.e.\ we have one of the situations appearing in Figure~\ref{plemgeodesicbigon1}.
\end{lem}

\begin{proof}If $\gamma_1(1)=\gamma_2(1)$ then we are in situation a) from Figure~\ref{plemgeodesicbigon1}. So assume $\gamma_1(1) \neq \gamma_2(1)$. We will show that in this case we are in situation b). We can assume that $\gamma_1$ and $\gamma_2$ do not intersect except at their endpoints, i.e.\ the bigon $\gamma_1 \cup \gamma_2$ is homeomorphic to $S^1$. If it is not the case, instead of $\gamma_1 \cup \gamma_2$ consider the bigon $\gamma_1|_{[0,m]} \cup \gamma_2|_{[0,m]}$, where $m$ is the smallest number such that $\gamma_1(m)=\gamma_2(m)$.
By the choice of $m$, the new bigon is homeomorphic to $S^1$.

Since $\gamma_1 \cup \gamma_2$ is homeomorphic to $S^1$, as in the proof of Theorem~\ref{twr:gromovhyp}, by Theorem~\ref{twr:istniejediagram} we have a minimal filling diagram $f\colon D \to X$ for $\gamma_1 \cup \gamma_2$. Since the restriction $f|_{\partial D}\colon \partial D \to \gamma_1 \cup \gamma_2$ is an isomorphism, we keep the same notation for the preimages of $\gamma_1 $ and $\gamma_2$ under $f$, and we apply the Gauss--Bonnet Theorem to $D$:
\[6=6\chi(D)=\kub(v_0) + \kub(v) + \sum_{v \in \mathrm{int} \gamma_1} \kub(v)+ \sum_{v \in \mathrm{int} \gamma_2} \kub(v) + \sum_{v \in \mathrm{int} D} \ku(v).\]

\begin{figure}[!h]
\centering
\begin{tikzpicture}[scale=1.25]

\node [above] at (-3,0.5) {a) $|\gamma_1(1),\gamma_2(1)|=0$};
\node [above] at (0,0.5) {b) $|\gamma_1(1),\gamma_2(1)|=1$};

\draw (-3,0)--(-3,-0.87);
\draw [dashed] (-3,-0.87) to [out=315,in=45] (-3,-2.5);
\draw [dashed] (-3,-0.87) to [out=225,in=135] (-3,-2.5);
\node [below] at (-3,-2.5)  {$v_0$};
\node [above] at (-3,0)  {$v$};
\node [left] at (-3,-0.87)  {$\gamma_1(1)$};
\node [right] at (-3,-0.87)  {$=\gamma_2(1)$};


\draw  (-0.5,-0.87)--(0,0)--(0.5, -0.87);
\draw [dashed] (-0.5,-0.87)--(0,-2.5);
\draw [dashed] (0.5,-0.87)--(0,-2.5);
\draw (-0.5,-0.87)--(0.5,-0.87);
\node [below] at (0,-2.5)  {$v_0$};
\node [above] at (0,0)  {$v$};
\node [left] at (-0.5,-0.87)  {$\gamma_1(1)$};
\node [right] at (0.5,-0.87)  {$\gamma_2(1)$};
\end{tikzpicture}
\caption{Distance $|\gamma_1(1), \gamma_2(1)|$ is at most $1$.}
\label{plemgeodesicbigon1}
\end{figure}

By Theorem~\ref{twr:istniejediagram} disc $D$ is locally $6$--large, so we have $\hi(v) \geq 6$ and thus $\ku(v) \leq 0$ for $v \in \mathrm{int}D$. By Lemma~\ref{lem:kubgeodesic} we get that $\sum_{v \in \mathrm{int} \gamma_i} \kub(v) \leq 1$, hence putting this together gives us:
\begin{align*}
6= 6\chi(D)  \leq&~  1 + 1+ 3-\hi(v_0)  +3- \hi(v)\\
6   \leq&~   8- (\hi(v_0)+ \hi(v))\\
\hi(v_0)+\hi(v)   \leq&~   2.
\end{align*}

Since $\gamma_1 \cup \gamma_2$ is homeomorphic to $S^1$, both $\hi(v_0)$ and $\hi(v)$ are at least $1$. So it must be $\hi(v_0)=\hi(v)=1$, which means that $|\gamma_1(1), \gamma_2(1)|=1$ in $D$. Since $f$ cannot increase this distance, we have $|\gamma_1(1),\gamma_2(1)|=1$ in $X$ as well.\end{proof}
The following lemma is very similar 
to Lemma~\ref{lem:geodesicbigon1}.

\begin{lem}
\label{lem:geodesicbigon2}
Let $v_0$, $v_1$ and $v_2$ be vertices in a systolic complex $X$, such that $v_1, v_2 \in S_n(v_0, X)$ for some $n>0$, and  $|v_1,v_2|=1$. For $i \in \{1,2\}$, let $\gamma_i$ be a geodesic joining $v_0$ with $v_i$ and $\gamma_i(0)=v_i$. Then either $|\gamma_1(1), \gamma_2(1)| \leq 1$, or $|\gamma_1(1), \gamma_2(1)|=2$ with the middle vertex $v$ of a geodesic realizing this distance contained in $S_{n-1}(v_0, X)$, and joined by edges to both $v_1$ and $v_2$ (see Figure~\ref{plemgeodesicbigon2}).\end{lem}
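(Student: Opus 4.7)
The plan is to form a geodesic triangle $v_0,v_1,v_2$ with sides $\gamma_1$, the edge $[v_1,v_2]$, and $\gamma_2$, take its minimal filling diagram $f\colon D \to X$ from Theorem~\ref{twr:istniejediagram}, and then apply Lemma~\ref{lem:geodesicbigon1} to appropriate sub-bigons of $D$. After reducing to the case where the boundary cycle is embedded, as in the proof of Lemma~\ref{lem:geodesicbigon1}, the disc $D$ is simplicial, locally $6$-large and nondegenerate. The key object is the unique triangle $T\subset D$ sharing the boundary edge $[v_1,v_2]$; let $w$ denote its third vertex.

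I first handle the case $w\in\partial D$. If $w$ lies on $\gamma_1$ strictly between $u_1$ and $v_0$, then $f(w)$ sits at distance at most $n-2$ from $v_0$ while being adjacent to $v_2$ in $X$, contradicting $|v_0,v_2|=n$; the same argument rules out $w\in\gamma_2\setminus\{u_2\}$, and $w=v_0$ is impossible for $n\geq 2$ (the case $n=1$ gives $u_1=u_2=v_0$, so the lemma is trivial). The remaining possibilities are $w=u_1$ or $w=u_2$; by symmetry assume $w=u_1$. Then $[u_1,v_2]$ is an edge of $X$, so $\gamma_1|_{[1,n]}$ concatenated with $[u_1,v_2]$ is a length-$n$ geodesic from $v_0$ to $v_2$, and Lemma~\ref{lem:geodesicbigon1} applied to the resulting bigon at the endpoint $v_2$ yields $|u_1,u_2|\leq 1$, placing us in the first conclusion of the lemma.

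Otherwise $w$ is an interior vertex of $D$. By nondegeneracy of $f$ the vertex $z:=f(w)$ is adjacent in $X$ to both $v_1$ and $v_2$, so $|v_0,z|\in\{n-1,n,n+1\}$. Assuming $z\in S_{n-1}(v_0,X)$, any geodesic from $v_0$ to $z$ extended by $[z,v_i]$ is a length-$n$ geodesic from $v_0$ to $v_i$, hence Lemma~\ref{lem:geodesicbigon1} applied to the bigons formed with $\gamma_1$ and $\gamma_2$ gives $|u_1,z|\leq 1$ and $|u_2,z|\leq 1$, so $|u_1,u_2|\leq 2$ and $z$ serves as the middle vertex whenever equality holds. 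To exclude $|v_0,z|\in\{n,n+1\}$ one exploits the minimality of $D$: a geodesic from $v_0$ to $z$ together with the single triangle $[z,v_1,v_2]$ yields a filling of $\gamma_1\cup[v_1,v_2]\cup\bar\gamma_2$ of strictly smaller area than $D$, contradicting minimality.

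The main obstacle I expect is precisely this last step, ruling out $|v_0,z|\geq n$ in the interior case; it is the place where the minimality of $D$ and local $6$-largeness of the disc must be combined most delicately, and it is what keeps Lemma~\ref{lem:geodesicbigon2} from being a purely formal consequence of Lemma~\ref{lem:geodesicbigon1}.
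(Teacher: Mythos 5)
Your setup (minimal filling diagram for $\gamma_1\cup[v_1,v_2]\cup\gamma_2$ via Theorem~\ref{twr:istniejediagram}, reduction to an embedded boundary cycle, repeated use of Lemma~\ref{lem:geodesicbigon1}) is the same as the paper's, and your boundary case is fine. The gap is exactly where you feared, in the interior case, and your proposed fix does not close it. First, the object you describe --- a geodesic from $v_0$ to $z$ together with the single triangle $[z,v_1,v_2]$ --- is not a filling diagram for the cycle $\gamma_1\cup[v_1,v_2]\cup\gamma_2$: its boundary does not contain the edges of $\gamma_1$ or $\gamma_2$ at all, so it is not a competitor for $D$ and no area comparison is available. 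Second, the case $|v_0,f(w)|=n$ should not be \emph{excluded}; it must be \emph{handled}, and the correct outcome there is conclusion (a)/(b) of the lemma, not a contradiction. With only the adjacencies $f(w)\sim v_1$ and $f(w)\sim v_2$ you have no way to relate $\gamma_1(1)$ to $\gamma_2(1)$ once $|v_0,f(w)|\ge n$, so the argument stalls.

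What the paper does instead is run Gauss--Bonnet on $D$ (using Lemma~\ref{lem:kubgeodesic} on the two geodesic sides and local $6$-largeness in the interior) to obtain $\hi(v_0)+\hi(v_1)+\hi(v_2)\le 5$. Hence either $\hi(v_1)=1$ or $\hi(v_2)=1$, which is your boundary case $w=u_i$, or else $\hi(v_1)=\hi(v_2)=2$; in the latter case the third vertex $v$ of the triangle on $[v_1,v_2]$ is adjacent \emph{in $D$} not only to $v_1,v_2$ but also to $\gamma_1(1)$ and $\gamma_2(1)$. These two extra adjacencies are precisely what your triangle-only analysis lacks: they rule out $|v_0,f(v)|=n+1$ outright, and if $|v_0,f(v)|=n$ they let you apply Lemma~\ref{lem:geodesicbigon1} to the two geodesics $[v,\gamma_i(1)]\cup\gamma_i|_{[1,n]}$ ($i=1,2$) from $v$ to $v_0$ to conclude $|\gamma_1(1),\gamma_2(1)|\le 1$, landing in case (b). Only when $|v_0,f(v)|=n-1$ does one land in case (c), with $f(v)$ as the middle vertex. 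So you need either the Gauss--Bonnet count or some substitute producing the adjacencies of $w$ to $\gamma_1(1)$ and $\gamma_2(1)$; as you suspected, the lemma is not a formal consequence of Lemma~\ref{lem:geodesicbigon1} applied at the edge $[v_1,v_2]$ alone.
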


\begin{proof}Consider a cycle $\gamma_1 \cup \gamma_2 \cup [v_1,v_2]$. If $\gamma_1(1)=\gamma_2(1)$ then we are in situation a) in Figure~\ref{plemgeodesicbigon2}. Now assume that $|\gamma_1(1),\gamma_2(1)| \geq 1$. As in the proof of Lemma~\ref{lem:geodesicbigon1} we can assume that cycle $\gamma_1 \cup \gamma_2 \cup [v_1,v_2]$ is homeomorphic to $S^1$. By Theorem~\ref{twr:istniejediagram} we can pick a minimal filling diagram $f\colon D \to X$ for $\gamma_1 \cup \gamma_2 \cup [v_1,v_2]$. Using the same notation and estimates as in the proof of Lemma~\ref{lem:geodesicbigon1}, we get that $\ku(v) \leq 0$ for all $v$ in $\mathrm{int}D$, and $\sum_{v \in \mathrm{int} \gamma_i} \kub(v) \leq 1$ for both $\gamma_1$ and $\gamma_2$. Therefore applying the Gauss--Bonnet Theorem gives the following inequalities:

\begin{align*}
6=\sum_{v \in \gamma_1} \kub(v)+ \sum_{v \in \gamma_2} \kub(v) +  \sum_{v \in \mathrm{int} D} \ku(v) \leq&~1+1+  3-\hi(v_0)~+ \\ &+3-\hi(v_1) +3-\hi(v_2)  \\
6 \leq&~2+ 9 -(\hi(v_0) +\hi(v_1) +\hi(v_2))\\
\hi(v_0) +\hi(v_1) +\hi(v_2) \leq&~5.
\end{align*}

\begin{figure}[!h]
\centering
\begin{tikzpicture}[scale=1.25]

\node [above] at (-2-1,0.5) {a) $|\gamma_1(1),\gamma_2(1)|=0$};

\draw (-2.5-1,0)--(-1.5-1,0)--(-2-1,-0.87)--(-2.5-1,0);
\draw [dashed] (-2-1,-0.87) to [out=315,in=45] (-2-1,-2.5);
\draw [dashed] (-2-1,-0.87) to [out=225,in=135] (-2-1,-2.5);
\node [below] at (-2-1,-2.5)  {$v_0$};
\node [above] at (-2.5-1,0)  {$v_1$};
\node [above] at (-1.5-1,0)  {$v_2$};
\node [left] at (-2-1,-0.87) {$\gamma_1(1)$};
\node [right] at (-2-1,-0.87) {$= \gamma_2(1)$};

\node [above] at (0.5,0.5) {b) $|\gamma_1(1),\gamma_2(1)|=1$};

\draw (0,-0.87)--(0,0)--(1,0)--(1,-0.87);
\draw (0,-0.87)--(1,-0.87);
\draw (0,-0.87)--(1,0);
\draw [dashed] (1,-0.87)--(0,0);
\draw [dashed] (0,-0.87)--(0.5,-2.5);
\draw [dashed] (1,-0.87)--(0.5,-2.5);
\node [below] at (0.5,-2.5)  {$v_0$};
\node [above] at (0,0)  {$v_1$};
\node [above] at (1,0) {$v_2$};
\node [left] at (0,-0.87)  {$\gamma_1(1)$};
\node [right] at (1,-0.87)  {$\gamma_2(1)$};





\node [above] at (6-2,0.5) {c) $|\gamma_1(1),\gamma_2(1)|=2$};

\draw (6.5-3.5, -0.87)--(7-3.5,0)--(8-3.5,0)--(8.5-3.5,-0.87);
\draw (6.5-3.5,-0.87)--(7.5-3.5,-0.87)--(8.5-3.5,-0.87);
\draw (7-3.5,0)--(7.5-3.5,-0.87)--(8-3.5,0);
\draw [dashed] (6.5-3.5,-0.87)--(7.5-3.5,-2.5);
\draw [dashed] (8.5-3.5,-0.87)--(7.5-3.5,-2.5);
\node [below] at (7.5-3.5,-2.5)  {$v_0$};
\node [left] at (6.5-3.5,-0.87)  {$\gamma_1(1)$};
\node [right] at (8.5-3.5,-0.87)  {$\gamma_2(1)$};
\node [below] at (7.5-3.5,-0.87)  {$v$};
\node [above] at (7-3.5,0)  {$v_1$};
\node [above] at (8-3.5,0)  {$v_2$};

\end{tikzpicture}
\caption{Distance $|\gamma_1(1), \gamma_2(1)|$ is at most $2$.}
\label{plemgeodesicbigon2}
\end{figure}
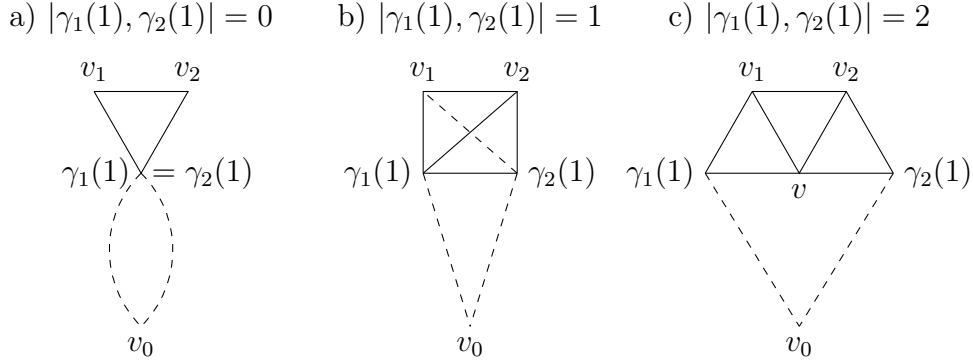
We have $\hi(v_i) \geq 1$ for $i \in \{0,1,2\}$, because $\gamma_1 \cup \gamma_2 \cup [v_1,v_2]$ is embedded, hence solving the last inequality we see that either both $\hi(v_1)$ and $\hi(v_2)$ are equal to $2$, or at least one of them is equal to $1$. Let us consider these two cases.
\begin{itemize}

\item
$\hi(v_1)=1$ (or $\hi(v_2)=1$).\\
This means that $\gamma_1(1)$ is connected by an edge to $v_2$. Then applying Lemma~\ref{lem:geodesicbigon1} to a geodesic bigon formed by $\gamma_2$ and $[v_2, \gamma_1(1)]\cup \gamma_1|_{[1,n]}$ we conclude that $|\gamma_1(1), \gamma_2(1)| = 1$ and we are in situation b) in Figure~\ref{plemgeodesicbigon2}.
\item
$\hi(v_1)=\hi(v_2)=2$.\\
Since both $v_1$ and $v_2$ are contained each in two triangles, we easily see that that there exists a vertex $v$ connected by edges to $v_1, v_2, \gamma_1(1)$ and $\gamma_2(1)$. Thus, it only remains to check that $|v,v_0|=n-1$. Clearly $|\gamma_1(1),v_0|=n-1$ and $|v, \gamma_1(1)|=1$, so $|v_0,v|$ is at most $n$. On the other hand $|v_1, v_0|=n$ and $|v_1,v|=1$, so $|v_0,v| \geq n-1$. Therefore we only need to show, that $|v_0,v|$ cannot be equal to $n$. But if $|v_0,v|=n$ then, again by Lemma~\ref{lem:geodesicbigon1} applied to geodesics $[v,\gamma_1(1)] \cup \gamma_1|_{[1,n]}$ and $[v,\gamma_2(1)] \cup \gamma_2|_{[1,n]}$ we obtain that $\gamma_1(1)$ and $\gamma_2(1)$ are connected by an edge. Hence we are in situation b) in Figure~\ref{plemgeodesicbigon2}. Otherwise $|v,v_0|=n-1$ and we are in situation c) in Figure~\ref{plemgeodesicbigon2}.
\end{itemize}
This proves that for the cycle $\gamma_1 \cup \gamma_2 \cup [v_1,v_2] \subset D$ we have one of the possibilities shown in Figure~\ref{plemgeodesicbigon2}. Since $f$ is simplicial, it cannot increase distances, and therefore we have the same possibilities for $\gamma_1 \cup \gamma_2 \cup [v_1,v_2]$ in $X$.
\end{proof}

Having these two lemmas proved we are ready to prove the Projection Lemma.

\begin{proof}[Proof of Theorem~\ref{twr:projectionlemma}]
First we prove that for a given $\sigma \subset S_n(v_0,X)$, the projection $\pi_{v_0}(\sigma) \subset S_{n-1}(v_0,X)$
is a simplex. Since $X$ is flag, it is enough to show that any two vertices of the projection are connected by an edge. Take two such vertices and call them $w_1$ and $w_2$. They are in $\pi_{v_0}(\sigma)$, so each of them is connected by an edge to every vertex of $\sigma$. Pick any vertex $v \in \sigma$ and consider a geodesic bigon formed by two geodesics: $[v,w_1] \cup \gamma_1$ and $[v,w_2] \cup \gamma_2$, where $\gamma_i$ is a geodesic from $w_i$ to $v_0$. By Lemma~\ref{lem:geodesicbigon1} we have $|w_1,w_2| \leq 1$, so either $w_1=w_2$ or they are connected by an edge. This proves that $\pi_{v_0}(\sigma)$ is a simplex.

Now we need to show that $\pi_{v_0}(\sigma)$ is non-empty. We proceed by induction on the dimension of $\sigma$. First assume $\mat{dim}\sigma=0$, i.e. $\sigma$ is a single vertex $v \in S_n(v_0,X)$. Since $X$ is connected, there is a geodesic $\gamma$ from $v$ to $v_0$, i.e.\ $\gamma(0)=v$. Then $\gamma(1) \in S_{n-1}(v_0,X)$ and $\gamma(1) \in \pi_{v_0}(\sigma)$ by the definition of the projection.

Now assume that $\mat{dim}\sigma=1$ and let $\sigma= [v_1,v_2]$ where $v_1, v_2 \in S_n(v_0,X)$. Choose geodesics $\gamma_1$ and  $\gamma_2$ joining respectively $v_1$ and $v_2$ to $v_0$. Applying Lemma~\ref{lem:geodesicbigon2} to the cycle $[v_1,v_2] \cup \gamma_1 \cup \gamma_2$ we obtain that the projection $\pi_{v_0}([v_1,v_2])$ is non-empty.

For the inductive step let $\mat{dim}\sigma= k-1$ for some $k>2$ and let $\sigma=[v_1,\ldots,v_k]$.  Consider faces $\tau_1 =[v_1,\ldots,v_{k-1}]$ and $\tau_2 =[v_2,\ldots,v_{k}]$. By inductive hypothesis, the projections $\pi_{v_0}(\tau_1)$ and $\pi_{v_0}(\tau_2)$ are non-empty. Choose vertices $w_1 \in \pi_{v_0}(\tau_1)$ and $w_2 \in \pi_{v_0}(\tau_2)$. Since $k >2$, there is a vertex $v_i \in \sigma$ such that $i \notin \{1,k\}$. By construction, vertex $v_i$ is connected by edges to both $w_1$ and $w_2$. Choose geodesics $\gamma_1$ and $\gamma_2$ that join respectively $w_1$ and $w_2$ to $v_0$, and consider the geodesic bigon $[v_i, w_1] \cup \gamma_1  \cup [v_i, w_2] \cup \gamma_2 $, see Figure~\ref{pinductive}. By Lemma~\ref{lem:geodesicbigon1} we have $|w_1,w_2| \leq 1$. 

If $w_1=w_2$ then $w_1$ is connected by edges to all $v_i$ for $i \in \{1, \ldots,k\}$, and hence belongs to the projection  $\pi_{v_0}(\sigma)$.

If $|w_1, w_2|=1$, then we consider the cycle $(w_1, w_2, v_k, v_1)$. Since it has length $4$ it has a diagonal. If this diagonal is $[w_1, v_k]$ then $w_1$ is connected by edges to all $v_i$'s and hence $w_1\in \pi_{v_0}(\sigma)$. If not, then it must be $[w_2 ,v_1]$ and thus $w_2 \in \pi_{v_0}(\sigma)$.
\end{proof}

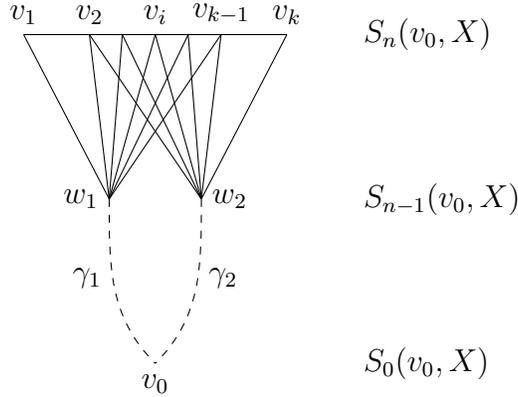
\begin{figure}[!h]
\centering
\begin{tikzpicture}[scale=1.75]
\node [above] at (0,0) {$v_1$};
\node [above] at (0.5,0) {$v_2$};
\node [above] at (1,0) {$v_i$};
\node [above] at (1.5,0) {$v_{k-1}$};
\node [above] at (2,0) {$v_{k}$};

\node [left] at (.65,-1.25) {$w_{1}$};
\node [right] at (1.35,-1.25) {$w_{2}$};
\node [left] at (0.675,-1.85) {$\gamma_{1}$};
\node [right] at (1.325,-1.85) {$\gamma_{2}$};

\draw (0,0)--(0.5,0);
\draw  (0.5,0)--(1.5,0);

\draw (0.75,0)--(0.65,-1.25);
\draw (1.25,0)--(0.65,-1.25);
\draw (1,0)--(0.65,-1.25);
\draw (1.5,0)--(2,0);
\draw (0,0)--(0.65,-1.25);
\draw (0.5,0)--(0.65,-1.25);
\draw (1.5,0)--(0.65,-1.25);

\draw (0.75,0)--(1.35,-1.25);
\draw  (1.25,0)--(1.35,-1.25);
\draw  (1,0)--(1.35,-1.25);
\draw (0.5,0)--(1.35,-1.25);
\draw (1.5,0)--(1.35,-1.25);
\draw (2,0)--(1.35,-1.25);

\draw [dashed] (.65,-1.25) to [out=270, in=135] (1,-2.5);
\draw [dashed] (1.35,-1.25) to [out=270,in=45] (1,-2.5);
\node [below] at (1,-2.5) {$v_{0}$};


\node [right] at (2.5,0) {$S_n(v_0,X)$};
\node [right] at (2.5,-1.25) {$S_{n-1}(v_0,X)$};
\node [right] at (2.5,-2.5) {$S_0(v_0,X)$};

\end{tikzpicture}

\label{pinductive}

\caption{Inductive step.}
\end{figure}

\section{Contractibility of systolic complexes}\label{r:contracto}
Contractibility of systolic complexes follows almost directly from the Projection Lemma. Intuitively, Projection Lemma gives us the possibility to collapse the complex, projecting simplices onto smaller and smaller balls centred at a chosen vertex. In this section we turn this intuition into a proof. Contractibility of systolic complexes has been essentially proved in \cite{anstee}, where the stronger property of \emph{dismantlability} is established for bridged graphs (which are $1$--skeleta of systolic complexes).

\begin{twr}
\label{twr:contraction}
A finite dimensional systolic complex is contractible.
\end{twr}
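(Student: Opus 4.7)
The plan is to fix a base vertex $v_0 \in X$ and, since $X$ is connected, write $X = \bigcup_{n \geq 0} B_n(v_0, X)$. I will prove by induction on $n$ that every ball $B_n = B_n(v_0, X)$ is contractible; contractibility of $X$ then follows from standard CW-theoretic facts, as the inclusions $B_{n-1} \hookrightarrow B_n$ are cofibrations of CW-complexes, and an increasing union of contractible CW-subcomplexes along cofibrations is contractible. The base case $B_0 = \{v_0\}$ is immediate, so the core task is to exhibit, for each $n \geq 1$, a deformation retraction of $B_n$ onto $B_{n-1}$.

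The Projection Lemma supplies the retraction. For each simplex $\sigma \subset S_n(v_0, X)$, the projection $\pi_{v_0}(\sigma) \subset S_{n-1}(v_0, X)$ is a nonempty simplex, and by flagness the join $\sigma * \pi_{v_0}(\sigma)$ is itself a single simplex of $X$. Given any simplex $\tau \subset B_n$ that meets $S_n$, write $\tau = \sigma * \sigma'$ with $\sigma = \tau \cap S_n$ and $\sigma' = \tau \cap B_{n-1}$. Every vertex $v' \in \sigma'$ lies in $B_{n-1}$ and is adjacent to a vertex at distance $n$ from $v_0$, so $v' \in S_{n-1}$; since $v'$ is also adjacent to every vertex of $\sigma$, we have $v' \in \pi_{v_0}(\sigma)$. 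Hence $\sigma' \subset \pi_{v_0}(\sigma)$ and $\tau \subset \sigma * \pi_{v_0}(\sigma)$. The local retraction then pushes the $\sigma$-part of each point of $\tau$ into $\pi_{v_0}(\sigma)$ by a straight-line homotopy inside the ambient simplex $\sigma * \pi_{v_0}(\sigma)$, keeping the $\sigma'$-part fixed; at time $1$ the image lies in $\pi_{v_0}(\sigma) \cup \sigma' \subset B_{n-1}$.

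The step I expect to be the main obstacle is gluing these local retractions into one continuous homotopy on all of $B_n$. The monotonicity reversal $\rho \subset \sigma \Rightarrow \pi_{v_0}(\sigma) \subset \pi_{v_0}(\rho)$, immediate from the definition, makes the targets compatible, but a consistent choice of straight-line paths on overlapping simplices still requires care. The cleanest way to arrange this is to build the retraction by iterated simplicial collapses: for a simplex $\sigma$ of $S_n$ maximal in the sense that it is not a proper face of any other simplex of $S_n$, the residue $\mathrm{Res}(\sigma, B_n)$ coincides with the simplex $\sigma * \pi_{v_0}(\sigma)$, and this simplex can be elementarily collapsed onto its face $\pi_{v_0}(\sigma) \subset B_{n-1}$, thereby removing the open star of $\sigma$ from $B_n$. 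Processing the simplices of $S_n$ in decreasing order of dimension, and using finite-dimensionality to keep the collapses local and well-founded, one exhausts $B_n \setminus B_{n-1}$ and produces the desired deformation retraction, completing the induction.
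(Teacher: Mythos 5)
Your proposal is correct and follows essentially the same route as the paper: induct on balls $B_n(v_0,X)$, use the Projection Lemma to show that each maximal simplex $\sigma$ of $S_n(v_0,X)$ has $\mathrm{Res}(\sigma,B_n)=\sigma\ast\pi_{v_0}(\sigma)$, collapse away its star, and process $S_n$ in decreasing order of dimension. The only (cosmetic) differences are that the paper reduces to balls via Whitehead's theorem rather than a colimit argument, and that it isolates your final collapsing step as a separate lemma showing the retraction is a composition of genuine elementary collapses.
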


Note that since locally $6$--large complexes are supposed to be simplicial analogues of locally $\mathrm{CAT}(0)$ spaces, Theorem~\ref{twr:contraction} is a simplicial version of the \emph{Cartan--Hadamard Theorem}. The Cartan--Hadamard Theorem states that the universal cover of a locally $\mathrm{CAT}(0)$ space is 
contractible \cite[Theorem~II.4.1]{bridson1999metric}. In our situation the universal cover of a locally $k$--large complex is systolic, hence contractible. Proof of Theorem~\ref{twr:contraction} is based on two lemmas: the Projection Lemma and the following.

\begin{lem}
\label{lem:maximal}
Let $X$ be a simplicial complex, and let $\sigma \subset X$ be a simplex which 
is properly contained in exactly one maximal (with respect to inclusion) simplex of $X$. Then $X - \mathrm{Res}(\sigma, X)$ is a strong deformation retract of $X$. (Here $X - \mathrm{Res}(\sigma, X)$ denotes the subcomplex of $X$ which is obtained by removing all simplices that contain $\sigma$.)
\end{lem}
\begin{proof}Let $\tau$ be the unique maximal simplex containing $\sigma$.
In the case where $\sigma$ is a codimension $1$ face of $\tau$, the required retraction is called the \emph{elementary collapse} \cite[Section~I.2]{cohen1973course}. To prove general case, we construct the required retraction as the composition of finitely many elementary collapses.

Let $\tau' \subset \tau$ be a codimension $1$ face of $\tau$ such that $\sigma \subset \tau'$. Then consider the set $S=\{\rho \ |\ \sigma \subset \rho\subset \tau'\}$. This is a partially ordered set with respect to inclusion, hence we can extend this order to a linear order and we get $S=\{\sigma= \rho_0 < \ldots < \rho_n = \tau'\}$. Since $\tau' \subset \tau$ is a codimension $1$ face, there is a vertex $v \in \tau$ such that $\tau=\tau' \ast v$. Then we can perform a sequence of elementary collapses for pairs $\rho_i \subset \rho_i \ast v$ starting from $i=n$ with the pair $\tau' \subset \tau$, terminating at $i=0$ with the pair $\sigma=\rho_0 \subset \rho_0 \ast v$. Indeed, at each step $\rho_i$ is a face of exactly one simplex $\rho_i \ast v$, because we already removed all other subsimplices of $\tau$ which might properly contain $\rho_i$.

The composition of these elementary collapses is the required deformation retraction: we removed $\mathrm{Res}(\sigma, X)$ because we removed all the simplices $\rho_i$ and $\rho_i \ast v$ such that $\sigma \subset \rho_i$.
\end{proof}

\begin{proof}[Proof of Theorem~\ref{twr:contraction}]
 Pick any vertex $v_0$ of $X$. By Whitehead's Theorem \cite[Theorem 4.5]{hatcher}, in order to show that $X$ is contractible, it is enough to prove that for any $k \geq 1$, any map $f\colon (S^k, s_0) \to (X, v_0)$ is homotopic to the constant map. The image of $S^k$ is compact, hence it intersects only finitely many simplices of $X$. Thus it is contained in $B_n(v_0, X)$, for $n$ sufficiently large. We prove that for any $n$ the ball $B_n(v_0, X)$ is contractible. We do it by showing how to homotopy retract $B_n(v_0, X)$ onto $B_{n-1}(v_0, X)$, then the result follows by induction since $B_0(v_0, X)=v_0$.

Let $\sigma$ be a maximal simplex in $S_n(v_0, X) \subset B_n(v_0, X)$ and let $m=\mathrm{dim}\sigma$. Then $\sigma \ast \pi_{v_0}(\sigma)$ is a simplex of $B_n(v_0, X)$ containing $\sigma$. We claim that $\sigma \ast \pi_{v_0}(\sigma)$ is the unique maximal simplex containing $\sigma$. To see that, let $\tau$ be a simplex of $B_n(v_0, X)$ with $\sigma \subset \tau$. Let $v$ be a vertex of $\tau$ which does not belong to $\sigma$. Since $\sigma$ is a maximal simplex of $S_n(v_0, X)$, vertex $v$ must lie in $S_{n-1}(v_0, X)$. But $v$ is connected by edges to all vertices of $\sigma$, so by definition it lies in the projection of $\sigma$. Repeating this argument for any such vertex, we finally get that every vertex of $\tau$ either belongs to $\sigma$ or to $\pi_{v_0}(\sigma)$, hence $\tau$ is a subsimplex of $\sigma \ast \pi_{v_0}(\sigma)$. This proves the claim.

Thus $\sigma \subset B_{n}(v_0, X)$ meets the assumptions of Lemma~\ref{lem:maximal}, i.e. it is contained in exactly one maximal simplex of $B_{n}(v_0, X)$. By Lemma~\ref{lem:maximal} the ball $B_{n}(v_0, X)$ homotopy retracts onto $B_{n}(v_0, X) - \mathrm{Res}(\sigma,B_{n}(v_0, X))$. 
We can apply Lemma~\ref{lem:maximal} to all maximal simplices in $S_{n}(v_0, X)$ and obtain a complex which is a deformation retract of $B_{n}(v_0, X)$, where deformation retraction is obtained by performing all retractions arising in Lemma~\ref{lem:maximal} simultaneously. Call this complex $B_{n}(v_0, X)'$, and let $S_{n}(v_0, X)'$ denote $S_{n}(v_0,B_{n}(v_0, X)')$.

By the above procedure we removed all maximal simplices of $S_{n}(v_0,X)$ (removing residue of a simplex removes the simplex itself), thus in $S_{n}(v_0, X)'$ simplices of dimension $m-1$ are maximal. Hence in the same way as above, we show that any such a simplex is contained in a unique maximal simplex of $B_{n}(v_0, X)'$ and we can again apply Lemma~\ref{lem:maximal}.  Applying Lemma~\ref{lem:maximal} to all maximal simplices in $S_{n}(v_0, X)'$ we obtain a complex $B_{n}(v_0, X)''$, with maximal simplices in $S_{n}(v_0, X)''$ having dimension $m-2$. Continuing this procedure, we finally obtain a complex $B_{n}(v_0, X)^{(m+1)}$ with no simplices in $S_{n}(v_0, X)^{(m+1)}$, hence we have $B_{n}(v_0, X)^{(m+1)}=B_{n-1}(v_0, X)$.

Because every map used in the above procedure restricts to the identity on $B_{n-1}(v_0, X)$, we also get that $B_{n-1}(v_0, X)=B_{n}(v_0, X)^{(m+1)}$ is a deformation retract of $B_{n}(v_0, X)$, where deformation retraction is the composition of $m+1$ `simultaneous retractions' arising in the procedure. Now the claim follows by induction. 
\end{proof}

\section{Directed geodesics}\label{r:directed}
In this section we study \emph{directed geodesics}. These are sequences of simplices satisfying certain local conditions, which make them behave better than ordinary geodesics in many situations. Originally they were introduced in \cite[Definition~9.1]{januszsimplicial}. In our case the main purpose of discussing them is to prove Theorem~\ref{nottorsion}. The properties of directed geodesics which we show in this section, namely Theorem~\ref{twr:directedisnormal} and Lemma~\ref{lem:projectionray}, were proved in \cite[Fact~8.3.2, Lemma~9.3, Lemma~9.6]{januszsimplicial}. However, our proofs are different and in some way more elementary.

\begin{defi}
\label{def:directed} A sequence of simplices $(\sigma_0,\ldots,\sigma_n)$ in a systolic complex $X$ is called a \emph{directed geodesic} if it satisfies the following two conditions:
\begin{itemize}

\item[\emph{(i)}]
for any $0\leq i \leq n-1 $ simplices $\sigma_i$ and $\sigma_{i+1}$ are disjoint and together span a simplex of $X$.

\item[\emph{(ii)}]
we have $\mathrm{Res}(\sigma_{i+2}, X) \cap B_1(\sigma_{i}, X)= \sigma_{i+1}$ for any $ 0 \leq i \leq n-2$.

\end{itemize}
Here for a simplex $\sigma$ we define the ball $B_1(\sigma, X)$ to be the simplicial span of the set $\{v \in X \ | \ |v,\sigma| \leq 1\}$.
\end{defi}

\begin{uwg}Above definition is slightly different from the one in \cite[Definition~9.1]{januszsimplicial}: in our case the direction is reversed, i.e.\ originally condition (ii) is stated as $\mathrm{Res}(\sigma_i, X) \cap B_1(\sigma_{i+2}, X)= \sigma_{i+1}$. Our modification is needed to avoid notation problems in the construction of infinite directed geodesics.
\end{uwg}

The following theorem justifies term `geodesic' arising in Definition~\ref{def:directed} and is the crucial step in the proof of Theorem~\ref{nottorsion}.

\begin{twr}
\label{twr:directedisnormal}
Let $(\sigma_0,\ldots,\sigma_n)$ be a directed geodesic in a systolic complex $X$. Then any sequence of vertices $(v_0,\ldots,v_n)$ such that $v_i \in \sigma_i$, is a geodesic in $X^{(1)}$.
\end{twr}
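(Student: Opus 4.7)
Plan: Induction on $n$. The base cases $n\leq 1$ follow from condition~(i), which gives $v_i$ adjacent to $v_{i+1}$. For $n=2$, condition~(ii) with $i=0$ implies that any vertex of $\sigma_2$ at distance at most $1$ from $\sigma_0$ must lie in $\mathrm{Res}(\sigma_2,X)\cap B_1(\sigma_0,X)=\sigma_1$, contradicting $\sigma_1\cap\sigma_2=\emptyset$; hence $|\sigma_0,\sigma_2|\geq 2$ and $|v_0,v_2|=2$. The same reasoning gives $|\sigma_i,\sigma_{i+2}|\geq 2$ in general.

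For the inductive step with $n\geq 3$, apply the induction hypothesis to the overlapping sub-sequences $(\sigma_0,\ldots,\sigma_{n-1})$ and $(\sigma_1,\ldots,\sigma_n)$, each a directed geodesic of length $n-1$, to obtain $|v_0,v_{n-1}|=|v_1,v_n|=n-1$, so $|v_0,v_n|\in\{n-2,n-1,n\}$. To rule out $|v_0,v_n|=n-2$, form a geodesic bigon at $v_{n-1}$ with sides $(v_{n-1},v_{n-2},\ldots,v_0)$ (a geodesic by IH) and $(v_{n-1},v_n)\cup\gamma'$, where $\gamma'$ is any geodesic from $v_n$ to $v_0$ of length $n-2$; both sides have length $n-1=|v_{n-1},v_0|$, so Lemma~\ref{lem:geodesicbigon1} forces $|v_{n-2},v_n|\leq 1$, contradicting $|\sigma_{n-2},\sigma_n|\geq 2$.

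The heart of the argument is ruling out $|v_0,v_n|=n-1$, which I attack via the Projection Lemma. Let $\tau=\{v\in\sigma_n:|v_0,v|=n-1\}\subset\sigma_n$, a nonempty subsimplex contained in $S_{n-1}(v_0,X)$; since $\sigma_{n-1}\subset S_{n-1}(v_0,X)$ by IH, the simplex $\sigma_{n-1}\cup\tau$ (a face of $\sigma_{n-1}\ast\sigma_n$) lies in $S_{n-1}(v_0,X)$, and the Projection Lemma furnishes a vertex $p\in\pi_{v_0}(\sigma_{n-1}\cup\tau)\subset S_{n-2}(v_0,X)$ adjacent to every vertex of $\sigma_{n-1}\cup\tau$. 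A geodesic bigon at $v_{n-1}$ between $(v_{n-1},v_{n-2},\ldots,v_0)$ and $(v_{n-1},p)\cup\gamma''$, for $\gamma''$ a geodesic from $p$ to $v_0$ of length $n-2$, now has both sides of length $n-1$, so Lemma~\ref{lem:geodesicbigon1} yields $|v_{n-2},p|\leq 1$ for every $v_{n-2}\in\sigma_{n-2}$; the alternative $p\in\sigma_{n-2}$ would put a vertex of $\sigma_{n-2}$ adjacent to $\tau\subset\sigma_n$ and violate $|\sigma_{n-2},\sigma_n|\geq 2$, so $p$ is adjacent to every vertex of $\sigma_{n-2}$, and in particular $p\in B_1(\sigma_{n-2},X)$. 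When $\tau=\sigma_n$ this gives $p\in\mathrm{Res}(\sigma_n,X)\cap B_1(\sigma_{n-2},X)=\sigma_{n-1}\subset S_{n-1}(v_0,X)$ by condition~(ii), contradicting $p\in S_{n-2}(v_0,X)$.

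The main obstacle is the residual subcase $\tau\subsetneq\sigma_n$, in which some $v_n^{\ast}\in\sigma_n$ sits at distance $n$ from $v_0$: here $p$ is only known to be adjacent to $\tau$, so $p$ need not lie in $\mathrm{Res}(\sigma_n,X)$ and condition~(ii) does not immediately close. I expect to handle this either by iterating the projection-and-bigon construction inward to build a chain $p=p_0,p_1,\ldots,p_k$ with $p_k\in S_{n-2-k}(v_0,X)$ adjacent to $\sigma_{n-1-k}\cup\sigma_{n-2-k}$, and then invoking condition~(ii) at a deeper index to reach a contradiction, or alternatively by a Gauss--Bonnet analysis of a minimal filling diagram for the cycle formed by $(v_0,v_1,\ldots,v_n)$ together with a geodesic $v_0\to v_n$ of length $n-1$, exploiting that every length-two sub-path of $(v_0,\ldots,v_n)$ is a geodesic in $X$ (by IH on proper sub-sequences) to sharpen the boundary-curvature bound of Lemma~\ref{lem:kubgeodesic}.
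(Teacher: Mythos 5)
Your argument is incomplete at exactly the point you flag yourself: the subcase $\emptyset\neq\tau\subsetneq\sigma_n$ is a genuine gap, not a residual technicality. Since the vertices of $\sigma_n$ are pairwise adjacent, once distance $n-2$ is excluded the a priori possibilities are that each vertex of $\sigma_n$ lies at distance $n-1$ or $n$ from $v_0$, and the mixed situation is precisely the one you cannot close: your projection vertex $p$ is only known to be adjacent to $\sigma_{n-1}\cup\tau$, so flagness does not place it in $\mathrm{Res}(\sigma_n,X)$ and condition~(ii) of Definition~\ref{def:directed} cannot be invoked. Neither proposed repair is carried out, and neither is routine. The iteration idea stalls one level down for the same reason (to use condition~(ii) at index $n-3$ you would need $p\in B_1(\sigma_{n-3},X)$, which you do not have), while the Gauss--Bonnet fallback is essentially the paper's actual proof, whose difficulty sits exactly where your sketch is vague: knowing that every length-two subpath of $(v_0,\ldots,v_n)$ is a geodesic only rules out boundary curvature $2$ along that side, leaving the trivial bound $\sum_{i=1}^{n-1}\kappa_{\partial}(v_i)\leq n-1$, nowhere near the bound $\leq 1$ that the Euler characteristic count requires. (The parts you do complete --- the base cases, the exclusion of $|v_0,v_n|=n-2$ via Lemma~\ref{lem:geodesicbigon1}, and the case $\tau=\sigma_n$ via the Projection Lemma --- are correct.)

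For comparison, the paper does not induct on $n$. It reduces to showing that no geodesic from $v_0$ to $v_n$ avoids all of $\sigma_1,\ldots,\sigma_{n-1}$, takes a filling diagram minimal over \emph{all} choices of representatives $v_i\in\sigma_i$ and all such avoiding geodesics simultaneously, and applies Gauss--Bonnet. The two nontrivial inputs are: (a) minimality forces $\sum_{w\in\mathrm{int}\,\gamma}\kappa_{\partial}(w)\leq 0$ on the avoiding side; and (b) on the $(v_i)$ side, condition~(ii) is used to show that two consecutive vertices cannot both have boundary curvature $1$, and a local retriangulation (replacing $v_{i+1}$ by a suitable $w_{i+1}\in\sigma_{i+1}$ to ``push the positive curvature upstairs'') propagates any curvature-$1$ vertex to an endpoint, giving $\sum_{i=1}^{n-1}\kappa_{\partial}(v_i)\leq 1$. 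That replacement step, which exploits the freedom to vary the representatives within the minimizing family, is the idea your proposal is missing; without it, or a completed version of your projection iteration, the theorem is not proved.
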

\begin{proof}First we will show that this property is satisfied locally, i.e.\ that any triple $(v_i, v_{i+1}, v_{i+2})$ is a geodesic. Indeed, assume that
$(v_i, v_{i+1}, v_{i+2})$ is not a geodesic, i.e. $|v_i, v_{i+2}|=1$. This means that $v_{i+2} \in B_1(v_i, X) \subset B_1(\sigma_i, X)$ and since $ v_{i+2} \in \mathrm{Res}(\sigma_{i+2}, X)$, we get that $v_{i+2} \in B_1(\sigma_i, X) \cap \mathrm{Res}(\sigma_{i+2}, X)$.  By definition of a directed geodesic $ B_1(\sigma_i, X) \cap \mathrm{Res}(\sigma_{i+2}, X)=\sigma_{i+1}$, hence we get  $v_{i+2} \in \sigma_{i+1 }$, which contradicts the fact that $\sigma_{i+1}$ and $\sigma_{i+2}$ are disjoint. Thus $(v_i, v_{i+1}, v_{i+2})$ is a geodesic. This fact will be used many times later on.

Now observe, that in order to prove the theorem it is enough to prove that $|v_0,v_n|=n$. Indeed, then any sequence of vertices $(v_0, v_1,\dots,v_n)$, where $v_i \in \sigma_i$ gives a path from $v_0$ to $v_n$ of length $n$, hence a geodesic path.

Also note that if a geodesic $\gamma$ from $v_0$ to $v_n$ satisfies $\gamma(i) \in \sigma_i$ for some $1\leq i \leq n-1$, then we are done by induction: assume that the assertion of the theorem holds for any shorter sequence of vertices, then since $\gamma(i) \in \sigma_i$, by inductive hypothesis we have $|v_0,\gamma(i)|=i$ and $|\gamma(i),v_n|=n-i$, hence we get $|v_0, v_n|=n$.

Thus it suffices to show that there is no geodesic from $v_0$ to $v_n$, that is disjoint from the sequence $\sigma_1,\ldots,\sigma_{n-1}$. Assume conversely, that we have such a geodesic and call it $\gamma_0$.

Consider the set of cycles $C=\{(v_0,v_1,\ldots,v_{n-1},v_n) \cup \gamma\}$ where $v_i \in \sigma_i$ and $\gamma$ is a geodesic from $v_0$ to $v_n$ disjoint from $\sigma_i$ for $1\leq i\leq n-1$. By our assumption $C$ is non-empty. Any cycle in $C$ is homotopically trivial, hence by Theorem~\ref{twr:istniejediagram} it has a locally $6$--large filling diagram. Thus we can pick a cycle $(v_0,v_1,\ldots,v_{n-1},v_n) \cup \gamma$ such that its filling diagram $f\colon D \to X$ is minimal among all diagrams for cycles in $C$.


We would like to apply the Gauss--Bonnet Theorem to the diagram $D$, so we need to find certain curvature estimates. We keep the notation $(v_0,v_1,\ldots,v_{n-1},v_n) \cup \gamma$ for the boundary of $D$.

\begin{itemize}

\item
$\sum_{w \in \mathrm{int} \gamma} \kub(w) \leq 0$.\\
 We show that there are no vertices in $\mathrm{int} \gamma$ with positive boundary
curvature. Since $\gamma$ is a geodesic, for any interior vertex $w$ we have $\kub(w) \leq 1$ (cf. Lemma~\ref{lem:kubgeodesic}). 
Suppose there is a vertex $w_i \in \mathrm{int} \gamma$ with $\kub(w_i) = 1$. This means that $w_i$ is contained in two
triangles, call them $[w_{i-1}, w_i, w]$ and $[w_{i+1}, w_i, w]$. Remove these two triangles from $D$ and replace edges $[w_{i-1}, w_i]$ and
$[w_{i}, w_{i+1}]$ by $[w_{i-1}, w]$ and $[w, w_i]$. This gives a new geodesic $\gamma'$ and a new diagram $D'$ with smaller area, which contradicts the minimality of $D$. Figure~\ref{replacement} shows the replacement procedure.

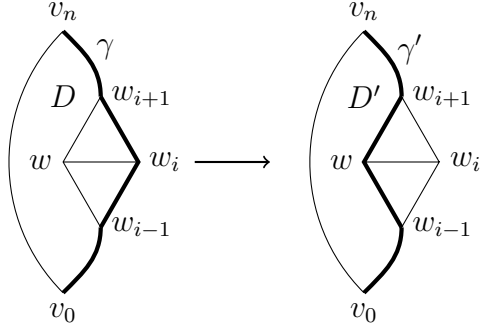
\begin{figure}[!h]
\centering

\begin{tikzpicture}[scale=1]

\draw (-3,0)--(-2,0)--(-2.5,-0.87)--(-3,0)--(-2.5,0.87)--(-2,0);
\draw [ultra thick] (-2.5,0.87) to [out=90,in=315] (-3,1.74);
\draw [ultra thick] (-3,-1.74) to [out=45,in=270] (-2.5,-0.87);
\draw (-3,1.74) to [out=225,in=135] (-3,-1.74);
\draw [ultra thick] (-2.5,-0.87)--(-2,0)--(-2.5,0.87);

\node [below] at (-3,-1.74) {$v_0$};
\node [above] at (-3,1.74) {$v_n$};
\node [left] at (-3,0) {$w$};
\node [right] at (-2,0) {$w_i$};
\node [right] at (-2.5,-0.87) {$w_{i-1}$};
\node [right] at (-2.5,0.87) {$w_{i+1}$};
\node [right] at (-2.70,1.5) {$\gamma$};
\node at (-3,0.87) {$D$};


\draw [->, thick] (-1.25,0)--(-0.25,0);

\draw (-3+4,0)--(-2+4,0)--(-2.5+4,-0.87)--(-3+4,0)--(-2.5+4,0.87)--(-2+4,0);
\draw [ultra thick] (-2.5+4,0.87) to [out=90,in=315] (-3+4,1.74);
\draw [ultra thick] (-3+4,-1.74) to [out=45,in=270] (-2.5+4,-0.87);
\draw (-3+4,1.74) to [out=225,in=135] (-3+4,-1.74);
\draw [ultra thick] (-2.5+4,-0.87)--(-3+4,0)--(-2.5+4,0.87);

\node [below] at (-3+4,-1.74) {$v_0$};
\node [above] at (-3+4,1.74) {$v_n$};
\node [left] at (-3+4,0) {$w$};
\node [right] at (-2+4,0) {$w_i$};
\node [right] at (-2.5+4,-0.87) {$w_{i-1}$};
\node [right] at (-2.5+4,0.87) {$w_{i+1}$};
\node [right] at (-2.7+4,1.5) {$\gamma'$};
\node at (-3+4,0.87) {$D'$};
\end{tikzpicture}
\caption{Replacement procedure.}
\label{replacement}
\end{figure}

\item $\sum_{i=1}^{n-1} \kub(v_i) \leq 1$.\\
First note that there cannot be any vertex $v_i$, with $\kub(v_i)=2$. In this case $v_{i-1}$ is connected by an edge to
$v_{i+1}$ which contradicts the fact that $(v_{i-1}, v_{i},v_{i+1})$ is a geodesic. We will now show that there cannot be two
consecutive vertices, each with boundary curvature equal to $1$. Assume conversely, that we do have two vertices $v_i, v_{i+1}$ each contained in two triangles (one triangle is common for $v_i$ and $v_{i+1}$). Call these triangles $[v_{i-1}, v_i, x]$, $[v_{i}, v_{i+1}, x]$ and $[v_{i+1}, v_{i+2}, x]$. We have $x \not \in \sigma_i$ for otherwise $(x,v_{i+1}, v_{i+2})$ would not be geodesic since $x$ and $v_{i+2}$ are connected by an edge. This, together with the definition of a directed geodesic and the fact that $x \in B_1(v_{i-1}, X)$ implies that $x$ does not belong to $\mathrm{Res}(\sigma_{i+1}, X)$. Hence there exists a vertex $w_{i+1} \in \sigma_{i+1}$ which is not connected by an edge to $x$. Since $w_{i+1}$ belongs to $\sigma_{i+1}$, by definition of a directed geodesic it is connected to all vertices of both $\sigma_i$ and $\sigma_{i+2}$, so in particular it is connected by an edge to $v_{i+2}$ and to $v_i$. Hence we have a cycle $(v_i, x, v_{i+2}, w_{i+1})$ of length $4$. By Theorem~\ref{twr:localtoglobal} the complex $X$ is $6$--large so this cycle has a diagonal (see Figure~\ref{pdirecteddiagonal}). By the choice of $w_{i+1}$ it cannot be $[w_{i+1}, x]$ so it has to be $[v_i, v_{i+2}]$, which contradicts the fact that $(v_i, v_{i+1}, v_{i+2})$ is a geodesic. Hence, there cannot be two consecutive vertices each with boundary curvature equal to $1$.

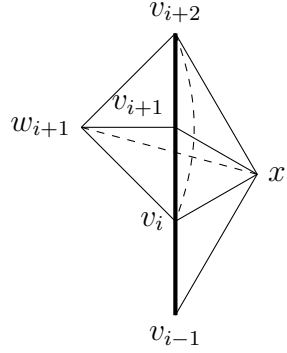
\begin{figure}[!h]
\centering
\begin{tikzpicture}[scale=1.25]
\draw [ultra thick] (0,-2)--(0,-1)--(0,0)--(0,1);
\draw (0,-1)--(0.87,-0.5)--(0,1);
\draw (0,-2)--(0.87,-0.5)--(0,0);
\draw (-1,0)--(0,0);
\draw (-1,0)--(0,1);
\draw (-1,0)--(0,-1);
\draw [dashed] (-1,0)--(0.87,-0.5);
\draw [dashed] (0,1) to [out=290,in=70] (0,-1);
\node [above left] at (0,0) {$v_{i+1}$};
\node [left] at (0,-1) {$v_{i}$};
\node [above] at (0,1) {$v_{i+2}$};
\node [below] at (0,-2) {$v_{i-1}$};
\node [right] at (0.87,-0.5,0) {$x$};
\node [left] at (-1,0) {$w_{i+1}$};
\end{tikzpicture}
\caption{Possible diagonals of the cycle $(v_i, x, v_{i+2}, w_{i+1})$.}
\label{pdirecteddiagonal}
\end{figure}

Next, take any vertex $v_i$ such that $\kub(v_i) =1$ and suppose that $\kub(v_{i+1})=0$. We then have four triangles: $[v_{i-1},
x, v_i]$, $[v_{i}, x, v_{i+1}]$, $[x, y, v_{i+1}]$ and $[v_{i+1}, y, v_{i+2}]$. Again, the vertex $x \not \in
\sigma_i$, so there is a vertex $w_{i+1}$ of $\sigma_{i+1}$ which is not connected by an edge to $x$, but is connected to both $v_i$ and $v_{i+2}$. Thus now we have a cycle $(v_i, x, y, v_{i+2}, w_{i+1})$ of length $5$, which must have a diagonal (see Figure~\ref{pdirecteddiagonal2}). It can be neither $[w_{i+1}, x]$ nor $[v_{i}, v_{i+2}]$. Also it cannot be $[x, v_{i+2}]$ because in that case we would have a cycle of length $4$ and two first edges are the only possible diagonals for that cycle. So the only possibility is that we have diagonals $[v_i, y]$ and $[w_{i+1}, y]$. Then we can do the following: replace the vertex $v_{i+1}$ by $w_{i+1}$ and triangles $[v_{i}, x, v_{i+1}]$, $[x, y, v_{i+1}]$ and $[v_{i+1}, y, v_{i+2}]$ by triangles $[v_i, x, y]$, $[v_i, y, w_{i+1}]$ and $[w_{i+1}, y, v_{i+2}]$, and call the resulting diagram $D'$. Diagram $D'$ is still minimal, but now we have $\kub(v_i) =0$ and $\kub(w_{i+1}) =1$.

\begin{figure}[!h]
\centering
\begin{tikzpicture}[scale=1.25]

\draw [ultra thick] (0,-2)--(0,-1)--(0,0)--(0,1);
\draw (0,-1)--(0.87, -0.5)--(0,0)--(0.87,0.5)--(0,1);
\draw (0.87,-0.5)--(0.87,0.5);
\draw (-1,0)--(0,0);
\draw (-1,0)--(0,-1);
\draw (-1,0)--(0,1);
\draw (-1,0)--(0,0);
\draw (0,-2)--(0.87,-0.5);

\node [below left] at (0,0) {$v_{i+1}$};
\node [left] at (0,-1) {$v_{i}$};
\node [above] at (0,1) {$v_{i+2}$};
\node [below] at (0,-2) {$v_{i-1}$};
\node [left] at (-1,0) {$w_{i+1}$};
\node [right] at (0.87,0.5) {$y$};
\node [right] at (0.87,-0.5) {$x$};

\draw [dashed, thick] (0,-1)--(0.87,0.5);
\draw [dashed, thick] (-1,0)--(0.87,0.5);


\draw [->, thick] (-1.25+2.75,-0.5)--(-0.25+2.75,-0.5);

\draw [ultra thick] (0+4,-2)--(0+4,-1)--(-1+4,0)--(0+4,1);
\draw (0+4,-1)--(0.87+4, -0.5);
\draw (0.87+4,0.5)--(0+4,1);
\draw (0.87+4,-0.5)--(0.87+4,0.5);
\draw (-1+4,0)--(0+4,-1);
\draw (-1+4,0)--(0+4,1);
\draw (0+4,-2)--(0.87+4,-0.5);

\draw (0+4,-1)--(0.87+4,0.5);
\draw (-1+4,0)--(0.87+4,0.5);

\node [left] at (3,0) {$w_{i+1}$};
\node [below left] at (0+4,-1) {$v_{i}$};
\node [above] at (0+4,1) {$v_{i+2}$};
\node [below] at (0+4,-2) {$v_{i-1}$};
\node [right] at (0.87+4,0.5) {$y$};
\node [right] at (0.87+4,-0.5) {$x$};

\end{tikzpicture}
\caption{Pushing upstairs the positive boundary curvature.}
\label{pdirecteddiagonal2}
\end{figure}
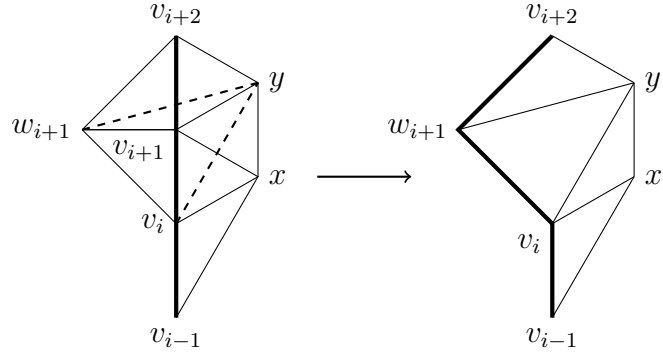

So whenever we have a vertex with boundary curvature equal to $1$ we can `push it upstairs' along vertices with curvature equal to $0$, till we arrive at a vertex with negative curvature or at $v_n$. Hence we have $\sum_{i=1}^{n-1} \kub(v_i) \leq 1$.
\end{itemize}

\begin{itemize}
\item $\sum_{v \in \mathrm{int} D} \kub(v) \leq 0$.\\
Since $X$ is locally $6$--large, for any $v \in \mathrm{int} D$ we have $\kub(v) \leq 0$.

\item $\kub(v_0) \leq 2$, $\kub(v_n) \leq 2$. \\
This follows from the fact that $(v_0,\ldots,v_n) \cup \gamma$ belongs to $C$.
\end{itemize}

To finish the argument we put all these inequalities into the Gauss--Bonnet formula:
 \begin{align*}
 6 &~=\kub(v_0) + \kub(v_n) + \sum_{v \in \mathrm{int} D} \kub(v) + \sum_{i=1}^{n-1} \kub(v_i) +\sum_{v \in \mathrm{int} \gamma} \kub(v) \leq \\
 &~\leq 2+2+0+1+0=5.
 \end{align*}
We get a contradiction, therefore the set $C$ is empty and hence $(v_0,\ldots,v_n)$ is a geodesic.
\end{proof}

Note that the second condition in the definition of a directed geodesic is somehow similar to the condition appearing in the Projection Lemma. Using Theorem~\ref{twr:directedisnormal} we can make this relation explicit.

\begin{lem}
\label{lem:projectionray}
Let $(v_0 = \sigma_0 ,\ldots, \sigma_n)$ be a a sequence of simplices in a systolic complex $X$, starting at a vertex $v_0$. Then $(v_0 = \sigma_0 ,\ldots, \sigma_n)$ is a directed geodesic if and only
if for all  $i \in \{0,\ldots n-1\}$ the simplex $\sigma_i \subset S_i(v_0,X)$ is the projection of $\sigma_{i+1}$ in the direction of $v_0$.
\end{lem}

\begin{proof}
First assume that $(v_0 ,\ldots, \sigma_n)$ is a directed geodesic. Let us recall the definition of the projection of $\sigma_i$ in the direction $v_0$. Note that by Lemma~\ref{twr:directedisnormal} for any $i$ we have $\sigma_i \subset B_i(v_0, X)$, so in particular the projection makes sense:
\[\pi_{v_0}(\sigma_{i+2})=\mathrm{Res}(\sigma_{i+2}, X) \cap B_{i+1}(v_0, X).\]
And by the definition of a directed geodesic we have:
\[\sigma_{i+1}=\mathrm{Res}(\sigma_{i+2}, X) \cap B_1(\sigma_{i}, X).\]
We have $\sigma_i \subset S_i(v_0, X)$ so $B_1(\sigma_{i}, X) \subset B_{i+1}(v_0, X)$, hence $\sigma_{i+1} \subset \pi_{v_0}(\sigma_{i+2})$. We need to show the other inclusion. Assume inductively that for all $0 \leq k \leq i$ we have $\sigma_{k} = \pi_{v_0}(\sigma_{k+1})$. The projection of a simplex is contained in the projection of any of its faces (follows directly from the definition), so since $\sigma_{i+1} \subset \pi_{v_0}(\sigma_{i+2})$, we get that $\pi_{v_0}(\pi_{v_0}(\sigma_{i+2})) \subset \pi_{v_0}(\sigma_{i+1})$ and the latter equals $\sigma_i$ by the inductive assumption. Hence $\pi_{v_0}(\sigma_{i+2}) \subset B_1({\sigma_i}, X)$ and therefore $\pi_{v_0}(\sigma_{i+2}) \subset \sigma_{i+1}$ by the definition of a directed geodesic. Thus we have $\pi_{v_0}(\sigma_{i+2}) = \sigma_{i+1}$. Clearly for $i=0$ we have $v_0 = \pi_{v_0}(\sigma_{1})$ so the claim follows by induction.

To prove the other direction assume $\sigma_k= \pi_{v_0}(\sigma_{k+1})$ for all $0\leq k \leq n$. Condition (i) of Definition~\ref{def:directed} is satisfied by the definition of the projection of a simplex, thus we need to prove that condition (ii) holds. We have $\sigma_{i}=\mathrm{Res}(\sigma_{i+1}, X) \cap B_i(v_0, X)$, and we need to show that this is equal to $\mathrm{Res}(\sigma_{i+1}, X) \cap B_1(\sigma_{i-1}, X)$.

By the assumption $\sigma_{i-1} \subset S_{i-1}(v_0, X)$ so $B_1(\sigma_{i-1}, X) \subset B_i(v_0, X)$, and hence we get that $\mathrm{Res}(\sigma_{i+1}, X) \cap B_1(\sigma_{i-1}, X) \subset \mathrm{Res}(\sigma_{i+1}, X) \cap B_i(v_0, X)=\sigma_i$. Conversely, we have $\sigma_{i-1}= \pi_{v_0}(\sigma_i)$ so $\sigma_{i} \subset B_1(\sigma_{i-1},X)$, and thus we get the other inclusion $\sigma_i = \mathrm{Res}(\sigma_{i+1}, X) \cap B_i(v_0, X)  \subset \mathrm{Res}(\sigma_{i+1}, X) \cap B_1(\sigma_{i-1}, X)$. So $\mathrm{Res}(\sigma_{i+1}, X) \cap B_1(\sigma_{i-1}, X) = \sigma_i$, and therefore the triple $(\sigma_{i-1}, \sigma_i, \sigma_{i+1})$ satisfies condition (ii) of Definition~\ref{def:directed}. This argument works for any $0 \leq i \leq n-2$, hence the claim follows.\end{proof}

\begin{uwg}There is a more general version of the Projection Lemma, which allows one to project in the direction of an arbitrary simplex (see \cite[Lemma~7.7]{januszsimplicial}). Lemma~\ref{lem:projectionray} is then also true if we let $\sigma_0$ be any simplex, not necessarily a vertex.
\end{uwg}

\section{Infinite systolic groups are not torsion}\label{r:torsion}
Here we turn to studying systolic groups, i.e.\ groups acting geometrically on systolic complexes. The aim of this section is to prove that an infinite systolic group contains an element of infinite order. This result is true for hyperbolic groups \cite[Proposition~III.$\Gamma$.2.22]{bridson1999metric} and for $\mathrm{CAT}(0)$ groups \cite[Theorem~11]{swensoncut}. As pointed out by the Referee, it is also true for systolic groups. This follows from biautomaticity of systolic groups shown in \cite[Theorem 13.1]{januszsimplicial} and the fact that all infinite biautomatic groups are not torsion (see \cite{gilman}).


However, our proof is direct and more elementary. It is based on the approach used in $\mathrm{CAT}(0)$ case, but instead of the usual geodesics we use the directed geodesics discussed in the previous section. Before proving Theorem~\ref{nottorsion} we show the existence of infinite directed geodesics in infinite systolic complexes.

\begin{lem}
\label{twr:istniejegeodesic}
Let $X$ be a locally finite, infinite systolic complex. Then $X$ contains an infinite directed geodesic.
\end{lem}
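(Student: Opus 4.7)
The plan is to fix a basepoint $v_0 \in X$ and build the infinite directed geodesic by a K\"onig-style diagonal argument on finite directed geodesics of increasing length, using Lemma~\ref{lem:projectionray} to produce each finite piece by iterated projection towards $v_0$.

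First I would observe that since $X$ is infinite, connected, and locally finite, each ball $B_n(v_0, X)$ is finite (by induction on $n$, since the vertices of $B_n$ not in $B_{n-1}$ are neighbours of vertices in $B_{n-1}$, which are finitely many by local finiteness) while $X$ itself is not. Hence for each $n \geq 1$ there is a vertex $w_n$ with $|v_0, w_n| = n$. Setting $\sigma_n^{(n)} := \{w_n\}$ and recursively $\sigma_i^{(n)} := \pi_{v_0}(\sigma_{i+1}^{(n)})$ for $i = n-1, \ldots, 0$, the Projection Lemma (Theorem~\ref{twr:projectionlemma}) guarantees that each $\sigma_i^{(n)}$ is a non-empty simplex in $S_i(v_0, X)$, and in particular $\sigma_0^{(n)} = \{v_0\}$. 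By Lemma~\ref{lem:projectionray}, the resulting finite sequence $(\sigma_0^{(n)}, \ldots, \sigma_n^{(n)})$ is a directed geodesic.

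Next, since each sphere $S_i(v_0, X)$ contains only finitely many simplices, there exists an infinite index set $N_1$ and a fixed simplex $\sigma_1$ with $\sigma_1^{(n)} = \sigma_1$ for all $n \in N_1$; inductively I can then extract infinite $N_{k+1} \subset N_k$ and a simplex $\sigma_{k+1}$ such that $\sigma_{k+1}^{(n)} = \sigma_{k+1}$ for every $n \in N_{k+1}$. A diagonal selection produces an infinite sequence $(\sigma_0 = v_0, \sigma_1, \sigma_2, \ldots)$ whose initial segment of any length $N$ coincides, for all sufficiently large $n \in N_N$, with the genuine finite directed geodesic $(\sigma_0^{(n)}, \ldots, \sigma_N^{(n)})$ constructed above. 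In particular $\sigma_i = \pi_{v_0}(\sigma_{i+1})$ for every $i$.

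Finally, conditions (i) and (ii) of Definition~\ref{def:directed} are local, involving at most three consecutive simplices, so they are inherited from the approximating finite directed geodesics by the limit sequence, which is therefore an infinite directed geodesic. The only real obstacle is the finiteness of the spheres needed to run the diagonal extraction; once that is supplied by local finiteness, the finite-to-infinite passage is automatic and the rest is a routine application of the Projection Lemma and of Lemma~\ref{lem:projectionray}.
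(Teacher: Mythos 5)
Your proof is correct and follows essentially the same route as the paper's: both build the sequence by iterated projection of far-away simplices toward a basepoint $v_0$, use local finiteness of $X$ (finiteness of the spheres $S_i(v_0,X)$) to run a K\"onig-type compactness/stabilization argument, and invoke Lemma~\ref{lem:projectionray} to conclude that the resulting sequence of consecutive projections is a directed geodesic. The paper phrases the extraction as stabilization of the descending families $P(m,n)$ of simplices rather than as a nested choice of infinite index sets, but this is the same argument.
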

\begin{proof}Pick a vertex $v_0 \in X$. We will construct the required geodesic by defining a sequence of projections onto $v_0$. Since $X$ is infinite and locally finite, it contains an infinite $X^{(1)}$--geodesic. We can take this geodesic to be issuing from $v_0$, so in particular for every $n$, the sphere $S_n(v_0, X)$ is non-empty.

Now consider the set of simplices in $S_1(v_0, X)$, which are projections in the direction of $v_0$ of some simplices of $S_2(v_0, X)$. Call this set $P(1,2)$. This set is non-empty, because $S_2(v_0, X)$ is non-empty. Similarly, for any $n$ we define the set $P(1,n)$ to be the set of all simplices of $S_1(v_0, X)$ which come from iterating a projection of some simplex of $S_n(v_0, X)$, namely \[P(1,n) = \{ (\pi_{v_0})^{(n-1)}(\sigma) \ | \  \sigma \subset S_n(v_0, X)\}.\] 

Again, any such a set is non-empty. Moreover, directly from the definition of these sets we have $P(1,n) \subset P(1,n-1)$. So we have a descending family of sets $\{P(1,n)\}_{n>1} \subset S_1(v_0, X)$. Since $X$ is locally finite the sphere $S_1(v_0, X)$ is finite, so for $N$ sufficiently large this family stabilizes, i.e.\ for any $n\geq N$ we have $P(1,n)$=$P(1,N)$, and obviously $P(1,N)$ is non-empty. Pick any simplex of $P(1,N)$, and call it $\sigma_1$.

Now define the set $P(m,n)$ to be a subset of $S_m(v_0,X)$ consisting of simplices which project onto $\sigma_{m-1}$, and come from the iterated projection of some simplex of $S_n(v_0,X)$ $(n>m)$. Again, for any $m$, the family $\{P(m,n)\}_{n>m}$ has the same properties as for $m=1$, i.e.\ is non-empty, descending and stabilizes for $n$ sufficiently large. Hence, we can define $\sigma_m$ to be any simplex of $P(m,N_m)$ where $N_m$ is taken such that $P(m,N_m)$ is stable, i.e. for all $n \geq N_m$ we have $ P(m,n)=P(m,N_m)$.

This procedure gives us an infinite sequence of simplices $(v_0, \sigma_1,\ldots)$, which is a directed geodesic by Lemma~\ref{lem:projectionray}, because by construction $\sigma_i$ is the projection of $\sigma_{i+1}$ in the direction of $v_0$.
\end{proof}


\begin{defi} Let $G$ be a finitely generated group which acts on a topological space $X$. We say that the action is:

\begin{itemize}
\item \emph{proper} if for every compact subset $K \subset X$ the set $\{g \in G  \ | \ gK \cap K \neq \emptyset \} $ is finite,

\item  \emph{cocompact} if there exists a compact subset $K \subset X$ such that $X=G  K$.

\end{itemize}
\end{defi}

\begin{defi}A finitely generated group $G$ is called \emph{systolic} if it acts properly and cocompactly by simplicial automorphisms on a systolic complex $X$.
\end{defi}

\begin{twr}
\label{nottorsion}
Let $G$ be an infinite systolic group. Then $G$ contains an element of infinite order.
\end{twr}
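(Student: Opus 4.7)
The plan is to produce a group element $g \in G$ whose $\langle g \rangle$-orbit on $X$ is unbounded; such a $g$ must have infinite order, because any finite cyclic group has finite and therefore bounded orbits.

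First I would set up the infinite directed geodesic. Since $G$ is infinite with finite stabilizers, $X$ has infinitely many vertices, and cocompactness combined with properness forces $X$ to be locally finite. Thus Lemma~\ref{twr:istniejegeodesic} applies and produces an infinite directed geodesic $(v_0 = \sigma_0, \sigma_1, \sigma_2, \ldots)$ in $X$. Next I would use a pigeonhole argument. Local finiteness and cocompactness together imply that there are only finitely many $G$-orbits of ordered pairs $(\sigma, \tau)$ of disjoint simplices that together span a simplex of $X$; among the infinite family $\{(\sigma_k, \sigma_{k+1})\}_{k \geq 0}$ two must therefore share an orbit, giving indices $i < j$ and $g \in G$ with $g\sigma_i = \sigma_j$ and $g\sigma_{i+1} = \sigma_{j+1}$. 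Since $\sigma_k \subset S_k(v_0, X)$ by Lemma~\ref{lem:projectionray}, necessarily $g \neq 1$.

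The main obstacle is to verify that the periodic extension
$$\sigma_i,\, \sigma_{i+1},\, \ldots,\, \sigma_{j-1},\, g\sigma_i,\, g\sigma_{i+1},\, \ldots,\, g\sigma_{j-1},\, g^2 \sigma_i,\, \ldots$$
is again a directed geodesic, which is precisely why matching adjacent pairs of simplices was important: a naive match of single simplices $\sigma_i$ and $\sigma_j$ would not control the neighbor $\sigma_{j+1}$. Conditions (i) and (ii) of Definition~\ref{def:directed} are local, involving at most three consecutive simplices, and within one period each window is already a $G$-translate of one in the original geodesic. At a junction, the triple $(g^k \sigma_{j-1}, g^k \sigma_j, g^{k+1}\sigma_{i+1})$ equals $(g^k \sigma_{j-1}, g^k \sigma_j, g^k \sigma_{j+1})$ because $g\sigma_{i+1} = \sigma_{j+1}$, and the subsequent triple $(g^k \sigma_j, g^{k+1}\sigma_{i+1}, g^{k+1}\sigma_{i+2})$ is a $G$-translate of $(\sigma_i, \sigma_{i+1}, \sigma_{i+2})$; in both cases condition (ii) reduces to an instance already known for the original directed geodesic.

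Finally, Theorem~\ref{twr:directedisnormal} would let me close the argument: picking $v_i \in \sigma_i$ and choosing the vertex $g^k v_i$ of $g^k \sigma_i$ for every $k$ produces a geodesic in $X^1$, so $|v_i, g^k v_i| = k(j-i) \to \infty$. Hence the orbit $\langle g\rangle \cdot v_i$ is unbounded, and $g$ has infinite order.
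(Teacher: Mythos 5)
Your proof is correct and follows essentially the same route as the paper: an infinite directed geodesic from Lemma~\ref{twr:istniejegeodesic}, a pigeonhole argument via cocompactness and local finiteness producing a $g$ that translates along it, the observation that conditions (i)--(ii) of Definition~\ref{def:directed} only involve windows of at most three consecutive simplices so the periodic extension is again a directed geodesic, and Theorem~\ref{twr:directedisnormal} to conclude. The only cosmetic differences are that you pigeonhole on $G$-orbits of ordered pairs $(\sigma_k,\sigma_{k+1})$ where the paper extracts convergent subsequences matching triples $(\sigma_{k-1},\sigma_k,\sigma_{k+1})$, and that you finish by exhibiting an unbounded $\langle g\rangle$-orbit where the paper derives a contradiction from $h^n=e$; both variants are valid.
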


\begin{proof}We follow the idea of Swenson's proof of an analogous result for $\mathrm{CAT}(0)$ groups \cite[Theorem~11]{swensoncut}. Let $X$ denote the systolic complex on which $G$ acts. Note that because $G$ is infinite and the action is proper and cocompact, the complex $X$ has to be locally finite and infinite. Thus, by Lemma~\ref{twr:istniejegeodesic} we can pick a vertex $v_0 \in X$ and an infinite directed geodesic $(v_0, \sigma_1,\ldots)$. We denote this geodesic by $\gamma$. We will use $\gamma$ to construct an element of infinite order.

Pick any sequence $(\sigma_k)$ of disjoint simplices of $\gamma$. We will be modifying this sequence by passing to a subsequence many times, which we will view as restricting the indices to the set $I$ for some $I \subset \mathbb{N}$. First note that since the action is cocompact, any simplex of $(\sigma_k)$ is in a translate of a compact set $K$. The set $K$ contains finitely many simplices, hence there are only finitely many possible values of $\mat{dim}\sigma_k$. Thus one particular dimension $d_0$ appears in $(\sigma_k)$ infinitely many times, and we take a subsequence $(\sigma_k)_{k \in I}$ such that $\mat{dim} \sigma_k =d_0$. Again by cocompactness, for every $\sigma_k$ there exists $g_k \in G$ such that $g_k(\sigma_k) \subset K$. Since $K$ is compact, there is a subsequence of $g_k(\sigma_k)$ which converges to some simplex $\tau_0 \subset K$. Because $G$ acts by simplicial automorphisms, for $k$ sufficiently large we have $g_k(\sigma_k)=\tau_0$.

Next, for every $\sigma_k$ we look at simplices $\sigma_{k-1}$ and $\sigma_{k+1}$. Again we have only finitely many possibilities for each of their dimensions, hence we can pass to a subsequence $(\sigma_k)_{k \in I}$ such that for every $k$ we have $\mat{dim}\sigma_{k-1}=d_{-1}$ and $\mat{dim}\sigma_{k+1}=d_{1}$. We have $\{\sigma_{k-1}, \sigma_{k+1} \} \subset B_1(\sigma_k,X)$, so also $\{g_k(\sigma_{k-1}),$ $ g_k(\sigma_{k+1}) \} \subset B_1(\tau_0,X)$. The ball $B_1(\tau_0,X)$ is compact, thus passing to a subsequence we get that $g_k(\sigma_{k-1}) \rightarrow \tau_{-1}$ and $g_k(\sigma_{k+1}) \rightarrow \tau_{1}$ for some simplices $\tau_{-1}$ and $\tau_1$ contained in $B_1(\tau_0,X)$. So taking $k$ sufficiently large we obtain $g_k(\sigma_{k-1}, \sigma_{k}, \sigma_{k+1})= (\tau_{-1}, \tau_0, \tau_{1})$.

Having $(\sigma_k)$ modified that much, we are ready to construct the desired element. Take $l>k$ such that $g_k$ and $g_l$ both satisfy the above condition, i.e.\ $g_k(\sigma_{k-1}, \sigma_{k}, \sigma_{k+1})=g_l(\sigma_{l-1}, \sigma_{l}, \sigma_{l+1})=(\tau_{-1}, \tau_{0}, \tau_{1}),$ and consider the element $h=g_l^{-1}g_k$. We claim that $h$ is of infinite order.

Indeed, the sequence $(\sigma_k,\ldots,\sigma_l) \subset \gamma$ is a directed geodesic, and so is the sequence $(h(\sigma_k),\ldots,h(\sigma_l))$, because $h$ is an $X^{(1)}$--isometry. Now consider the concatenation of these two. By construction of $h$ we have $(h(\sigma_{k-1}),h(\sigma_{k}),$ $h(\sigma_{k+1}))= (\sigma_{l-1},\sigma_{l},\sigma_{l+1})$, hence $(\sigma_{k-1},\ldots,\sigma_{l+1})$ and $(h(\sigma_{k-1}),\ldots,h(\sigma_{l+1}))$ intersect at three simplices. This is enough for $(\sigma_{k},\ldots,\sigma_{l}=h(\sigma_{k}),\ldots,h(\sigma_{l}))$ to be a directed geodesic, because the condition defining directed geodesic involves checking triples of consecutive simplices, and every triple in the concatenation is either contained in $(\sigma_{k-1},\ldots, \sigma_{l+1})$ or $(h(\sigma_{k-1}),\dots, h(\sigma_{l+1}))$. Iterating this procedure shows that for any $n>0$ the concatenation of $(h^m(\sigma_{k}),\ldots,h^m(\sigma_{l}))$ for $m=1$ to $n$ is a directed geodesic.

To finish the argument, assume that $h^n=e$ for some $n>0$. In particular this means $h^n(\sigma_k)=\sigma_k$, which in a view of Theorem~\ref{twr:directedisnormal} contradicts the fact that $(\sigma_k,\ldots,h^n(\sigma_k))$ is a directed geodesic.
\end{proof}

\end{document}